\documentclass[a4paper,12pt]{article}
\usepackage[utf8]{inputenc}
\usepackage{amsmath,amsthm,amsfonts,latexsym,amssymb,bm}

\newtheorem{Thm}{Theorem}[section]

\newtheorem{Lem}[Thm]{Lemma}

\newtheorem{Prop}[Thm]{Proposition}

\newtheorem{Rem}[Thm]{Remark}

\newcommand{\R}{\mathbb{R}}
\newcommand{\Z}{\mathbb{Z}}
\newcommand{\T}{\mathbb{T}}
\newcommand{\N}{\mathbb{N}}
\newcommand{\eps}{\varepsilon}
\newcommand{\weak}{\rightharpoonup}
\renewcommand{\dot}{\breve}
\begin{document}
\begin{center}
{\Large\bf On the Fu\v{c}ik spectrum of the wave operator and an
 asymptotically linear problem\footnote{2000
Mathematics Subject Classification: 35L05, 35L70, 35J20 \\
\indent Keywords: Fu\v{c}ik Spectrum, Nonlinear Wave Equation, Dual Variational Formulation,
Fenchel-Legendre Transform }}\\
\ \\
Pedro Girão\footnote{Email: pgirao@math.ist.utl.pt. 
Partially supported by the Fundação para a Ciência e a Tecnologia (Portugal) and by project
UTAustin/MAT/0035/2008.}
and Hossein Tehrani\footnote{Email:
tehranih@unlv.nevada.edu. This work was initiated while the author
was visiting IST Lisbon on a sabbatical from UNLV. The support of
both institutions is gratefully acknowledged.}
\\

\vspace{2.2mm}

{\small 
Center for Mathematical Analysis, Geometry and Dynamical Systems,\\
Instituto Superior Técnico,\\ Av.\ Rovisco Pais,
1049-001 Lisbon, Portugal\\
and\\
Department of Mathematical Sciences,\\
University of Nevada,\\
Las Vegas, NV 89154-4020, USA}
\end{center}

\begin{center}
{\bf Abstract}\\
\end{center}
\noindent We study generalized solutions of the nonlinear wave
equation
$$
u_{tt}-u_{ss}=au^+-bu^-+p(s,t,u),
$$
with periodic conditions in $t$ and homogeneous Dirichlet
conditions in $s$, under the assumption that the ratio
of the period to the length of the interval is two. When $p\equiv 0$ and $\lambda$ is a nonzero
eigenvalue of the wave operator, we give a proof of the existence
of two families of curves (which may coincide) in the Fu\v{c}ik
spectrum intersecting at $(\lambda,\lambda)$. This result is known
for some classes of self-adjoint operators (which does not cover
the situation we consider here), but in a smaller region than
ours. Our approach is based on a dual variational formulation and
is also applicable to other operators, such as the Laplacian. In
addition, we prove an existence result for the nonhomogeneous
situation, when the pair $(a,b)$ is not `between' the Fu\v{c}ik
curves passing through $(\lambda,\lambda)\neq(0,0)$ and $p$ is a
continuous function, sublinear at infinity.

\section{Introduction}

Let $\Omega$ be an open bounded region in $\R^N$ and
$L:D(L)\subset L^2(\Omega)\rightarrow L^2(\Omega)$ a self-adjoint
operator. Consider the equation
$$Lu=\alpha u^{+}-\beta u^{-},\qquad u\in D(L)$$
where $u^+=\max\{u,0\}$ and $u^-=u^+-u$. The set of points
$(\alpha, \beta)$ for which the equation has a nonzero solution is
called the Fu\v{c}ik spectrum of $L$. Since the pioneering work of
Fu\v{c}ik \cite{F}, there has been a growing interest in the study
of the structure of this set. In particular, if $\lambda$ is an
eigenvalue of $L$, then clearly $(\lambda, \lambda)$ is on the
Fu\v{c}ik spectrum. Of special interest is the study of
connected components of the Fu\v{c}ik spectrum that meet at
$(\lambda,\lambda)$. There are a number of works concerning the
structure of the Fu\v{c}ik spectrum for operators such as the
Laplace operator or, more generally, a self-adjoint operator with a
compact resolvent. But if one turns to the wave operator or to
operators with noncompact resolvent the list becomes considerably
smaller.

Here we only mention  the paper \cite{BFS}. In this work
Ben-Naoum, Fabry and Smets apply a Lyapunov-Schmidt decomposition
together with some contraction mapping arguments to give a
description of the Fu\v{c}ik spectrum of an operator away from its
essential spectrum, provided that some non-degeneracy conditions
are satisfied. These conditions do not seem to hold for the wave
operator however.

Before describing the main ideas in our approach, we briefly
recall some of the work on the Fu\v{c}ik spectrum of a
self-adjoint operator with compact resolvent. In the one
dimensional case the Fu\v{c}ik spectrum was completely described
by Fu\v{c}ik in \cite{F}.
Some of our spectrum will arise from this one dimensional case
(see Section~\ref{deux}).
If the space dimension is greater than
one, the complete understanding of the Fu\v{c}ik spectrum has been
more difficult. Ambrosetti and Prodi \cite{AP} obtained two
nonincreasing curves when the nonlinearity crosses the first
eigenvalue. This result was extended by Gallou\"et and Kavian
\cite{GK} and Ruf \cite{R} to the case where the nonlinearity
crosses a higher eigenvalue, in the case that the eigenvalue is
simple. Các \cite{C} generalized the result for the Laplacian
operator with Dirichlet boundary conditions.
Abchir~\cite{A} extended the result to a self-adjoint
operator with compact resolvent whose spectrum
is not bounded below. If
$\lambda_k$ is the $k$-th eigenvalue of $L$, we note that the
existence of the curves passing through the point
$(\lambda_k,\lambda_k)$, is usually established in the square
$\lambda_{k-1}\leq a,b\leq\lambda_{k+1}$. We also mention that
Marino, Micheletti and Pistoia in \cite{MMP} proved a detailed
result regarding some curves belonging to the Fu\v{c}ik spectrum
of the Laplacian on a bounded domain with Dirichlet conditions.
See also \cite{MP} where a different characterization of the
curves is given.

Finally in \cite{FG}, de Figueiredo and Gossez prove the existence
of a first Fu\v{c}ik curve through $(\lambda_2,\lambda_2)$ which
extends to infinity. Their result is for a general elliptic
operator in divergence form, and they provide a variational
characterization of the curve. In fact, drawing a line of positive
slope from $(\lambda_1,\lambda_1)$, de Figueiredo and Gossez obtain
the first intersection point with the Fu\v{c}ik spectrum through
some constrained minimization of a Dirichlet type integral. This
work provided the initial inspiration for our paper. Indeed,
applying a similar variational characterization, but to a suitably
{\it shifted}\/ dual problem, we are able to prove the existence of
two continuous curves through any nonzero eigenvalue point
$(\lambda_k, \lambda_k)$ of the wave operator. These curves, in a
sense made precise below, are the extreme parts of the Fu\v{c}ik
spectrum inside a rectangle containing the eigenvalue point. It is
worth mentioning that our approach can not only handle other
operators with noncompact resolvent (such as the beam operator),
but can also be applied to classes of operators that have already
been studied in the works mentioned above, providing new proofs
for known results but in regions that are larger than the square
$[\lambda_{k-1},\lambda_{k+1}]^2$.

In \cite{McK3} Choi, McKenna and Romano also used a dual
variational formulation and the mountain pass lemma
to prove existence of multiple periodic solutions
of the semilinear vibrating string model problem.

After completion of this work we learned that Ne\v{c}esal
\cite{N1} had proposed a somewhat similar approach (with no
theoretical justification) to obtain a numerical algorithm to
explore parts of the Fu\v{c}ik spectrum of the wave and beam
operators. Since the kernel of the wave operator is infinite
dimensional, it is not a priori clear that the dual problem has a
solution. In fact, in order to prove convergence of the maximizing
sequences, we have to employ an appropriate second shift which is
absent in Ne\v{c}esal's description.

Fu\v{c}ik \cite{F} and Dancer \cite{D} were the first to recognize
the importance of the Fu\v{c}ik spectrum in the study of
semilinear boundary value problems with linear growth at infinity.
Theorems on existence of solutions of the non-homogeneous equation
either treat so called type-I regions (when the pair $(a,b)$ does
not lie between the two curves above through a
$(\lambda_k,\lambda_k)$), or treat so called type-II regions (when
the pair $(a,b)$ lies between the two curves above through a
$(\lambda_k,\lambda_k)$). There is a substantial amount of work
done for type-I regions in the case of a self-adjoint operator
with compact resolvent. Results for type-II regions
are proved in the paper \cite{BFS},
where more general self-adjoint operators are also considered, as
mentioned above.

Several papers study the non-homogeneous wave equation.
Using the same boundary conditions that we use,
in~\cite{McK2} McKenna considers the case where $a=b$,
both at resonance and nonresonance, by reduction to a Landes\-man-Lazer
problem.
In \cite{W} Willem, with periodicity conditions on both variables,
overcomes the fact that the kernel of the wave operator is infinite
dimensional by proving a Continuation Theorem. Under
appropriate conditions, he proves the existence of at least
one generalized periodic solution of
$$
u_{tt}-u_{ss}=au^+-bu^-+p,
$$
with $\lim_{|u|\to+\infty}{p(s,t,u)}/{u}=0$, but only for $a$, $b$
in a box $0<\mu\leq a,b\leq\nu$,
$(\mu,\mu)\neq(a,b)\neq(\nu,\nu)$. Here $\mu$, $\nu$ are two
consecutive elements of the spectrum of the wave operator.
In~\cite{McK1} McKenna, Redlinger and Walter prove existence and
multiplicity results for an asymptotically homogeneous hyperbolic
problem when the nonlinearity is monotone in $u$. They are
particularly interested in the situation when the nonlinearity
crosses several eigenvalues. They reduce the problem to one on a
subspace on which the linear operator has a compact inverse; then
they apply degree theory. In \cite{McK5} Lazer and McKenna obtain
at least two solutions of a wave equation when the nonlinearity
crosses the first eigenvalue and is monotone increasing. In our
work, using our dual variational approach, we will also consider
the nonhomogeneous equation. We prove existence of a weak solution
for the parameters $a,b$ in larger type-I regions of the plane,
under a geometric condition for the linearization with respect to
$u$ of the nonlinearity.

We remark that if one restricts to a space of solutions
with certain symmetries, then one can remove zero from the
spectrum and end up with a compact operator, avoiding the
difficulties one usually finds in this type of problems. This idea
is due independently to Coron~\cite{Coron} and Vejvoda~\cite{Vej}.
It has been used by McKenna in \cite{McK4} to investigate
nonlinear oscillations in a suspension bridge.

Finally, in \cite{McK9} McKenna obtains results
for some situations in which the ratio of the period to the length of
the interval is irrational, sometimes referred to as a small divisors
problem. Two important differences arise.
The wave operator is invertible, and each point of the spectrum
is an eigenvalue of infinite multiplicity.

The organization of this work is as follows. First, in Sections~2
and 3 we treat the positive-homogeneous equation. Indeed, in
Section~2 we give a dual variational formulation for the Fu\v{c}ik
curves, and in Section~3 we prove existence of solutions of the
dual problems. In Section~4 we present and prove an existence
result for the non-homogeneous equation.
Our main results are Theorems~\ref{curvau}, \ref{curvad}, \ref{nonu}
and \ref{nond}.

\section{Two equivalent problems}\label{deux}
We denote by $\T$ the circle $\R/(2\pi\Z)$. Consider the wave
operator defined on $\{u\in H^2([0,\pi]\times\T)\::\:
u(0,t)=u(\pi,t)=0\ \mbox{for}\ t\in\T
\}$. Its eigenvalues are given by $\lambda_{(m,n)}=m^2-n^2$ for
any $(m,n)\in\N\times\N_0$. The functions
$$
 \phi_{(m,n)}=\textstyle\frac{\sqrt{2}}{\pi}\sin(ms)\cos(nt),\qquad
 \psi_{(m,n)}=\textstyle\frac{\sqrt{2}}{\pi}\sin(ms)\sin(nt)
$$
are eigenfunctions associated to $\lambda_{(m,n)}$ (see \cite{M}).
We let ${\cal H}=L^2([0,\pi]\times\T)$,
\begin{equation}\label{alp}
{\cal
H}=\left\{v=\sum[\alpha_{(m,n)}\phi_{(m,n)}+\beta_{(m,n)}\psi_{(m,n)}]\::\:
\sum[\alpha_{(m,n)}^2+\beta_{(m,n)}^2]<\infty\right\},
\end{equation}
and define
$${\cal R}=\overline{\mbox{span}\{\phi_{(m,n)},\psi_{(m,n)}\ \mbox{with}\ m\in\N,n\in\N_0, m\neq n\}}
,$$
$${\cal N}=\overline{\mbox{span}\{\phi_{(l,l)},\psi_{(l,l)}\ \mbox{with}\ l\in\N\}}
,$$ where the closures are in ${\cal H}$, so that ${\cal H}={\cal
R}\oplus{\cal N}$. Let also ${\cal H}^1={\cal R}\cap H^1([0,\pi]\times\T)$. We
denote by $\ldots\lambda_{-2}<\lambda_{-1}<\lambda_0=0<
\lambda_1<\lambda_2<\ldots$ the eigenvalues ordered according to
their value. Suppose that $\lambda_k\neq 0$. We are concerned with
finding continuous functions $a$ and $b$ from $\R^+$ to $\R$,
satisfying $a(1)=b(1)=\lambda_k$, such that for each $r\in\R^+$
the problem
\begin{eqnarray}\label{z}
\Box u=u_{tt}-u_{ss}=a(r)u^+-b(r)u^-&&\mbox{in}\:]0,\pi[\times\T,\\
u(0,t)=u(\pi,t)=0\qquad\qquad\quad\ \ \ &&\mbox{for}\ 0\leq t\leq
2\pi,\label{bdr}
\end{eqnarray}
has a nonzero weak solution $u$. As before $u^+=\max\{u,0\}$ and
$u^-=u^+-u$. By a weak solution we mean a critical point of the
energy functional $\cal I$ , defined on ${\cal H}^1\times\cal N$;

$$
{\cal I}(x,y)=
{\textstyle\frac{1}{2}}\left(\|x_s\|^2-\|x_t\|^2-a\|(x+y)^+\|^2-b\|(x+y)^-\|^2\right).
$$
A word on notation. Throughout $\langle\ \cdot\ ,\ \cdot\ \rangle$
denotes the inner product in ${\cal H}$ and $\|\,\cdot\,\|$ the
corresponding norm.

We identify the pair $(x,y)$ with $u=x+y$. Since the
functions $\phi_{(m,n)}$ and $\psi_{(m,n)}$ satisfy (\ref{bdr}), the
trace theorem implies that any $x\in {\cal H}^1$ has $L^2$ trace
on $\{0\}\times\T\,\,\cup\,\{\pi\}\times\T$ satisfying
(\ref{bdr}). On the other hand, any function $y\in {\cal N}$ is of
the form $y(s,t)=\kappa(t+s)-\kappa(t-s)$ for some $\kappa\in
L^2(\T)$ and thus also satisfies (\ref{bdr}). Therefore, any
critical point of ${\cal I}$ satisfies the boundary conditions
(\ref{bdr}).

We should mention that some of the Fu\v{c}ik spectrum
will come from solutions of the form
$$u(s,t)=\sin s \times T(t)$$
with $T''+ T = a T^+ - b T^-$,
and from solutions of the form
$$
u(s,t)=S(s) \times 1
$$
with $- S'' = a S^+ - b S^-$. This produces Fu\v{c}ik curves
through the eigenvalues $1-n^2$ and $m^2$, respectively.
The curves can be obtained explicitly (see~\cite{F}).

\subsection{Variational formulation for the
Fu\v{c}ik curve through\\ $(\lambda_k,\lambda_k)$ closest to $(\lambda_{k-1},\lambda_{k-1})$}

Let $k$ be a nonzero integer.
Choose a small parameter $\eps_1$ with $0<\eps_1<\lambda_k-\lambda_{k-1}$.
Suppose $u$ is a weak solution of (\ref{z})-(\ref{bdr}). Then $u$ solves
\begin{equation}\label{u}
 (\Box-(\lambda_{k-1}+\eps_1))u=(a-\lambda_{k-1}-\eps_1)u^+-
(b-\lambda_{k-1}-\eps_1)u^-
\end{equation}
together with (\ref{bdr}).
We write $u$ as
\begin{equation}\label{tet}
u=\sum\left[\theta_{(m,n)}\phi_{(m,n)}+\omega_{(m,n)}\psi_{(m,n)}\right].
\end{equation}
Suppose $a$, $b>\lambda_{k-1}+\eps_1$. Consider the convex function $H:\R\to\R$
defined by
\begin{equation}\label{h}
H(u)=\frac{1}{2}(a-\lambda_{k-1}-\eps_1)(u^+)^2+\frac{1}{2}
(b-\lambda_{k-1}-\eps_1)(u^-)^2.
\end{equation}
Let
$$
v:=\nabla H(u)=(a-\lambda_{k-1}-\eps_1)u^+-
(b-\lambda_{k-1}-\eps_1)u^-.
$$ From (\ref{u}) we deduce that $u=Lv$ for $L:{\cal H}\to{\cal H}$
defined by
$$
Lv=\sum\left[{\textstyle\frac{\alpha_{(m,n)}}{m^2-n^2-\lambda_{k-1}-\eps_1}}\phi_{(m,n)}+
{\textstyle\frac{\beta_{(m,n)}}{m^2-n^2-\lambda_{k-1}-\eps_1}}\psi_{(m,n)}\right]
$$
and $v$ written as in (\ref{alp}).
Hence the function $v$
satisfies
\begin{equation}\label{v}
 Lv=\nabla H^*(v)=\frac{1}{a-\lambda_{k-1}-\eps_1}v^+-
\frac{1}{b-\lambda_{k-1}-\eps_1}v^-.
\end{equation}
where $H^{*}$ is the Fenchel-Legendre transform of the convex
function $H$.
\par\vspace{.5cm}
Conversely, suppose now $v\in{\cal H}$ is a solution of (\ref{v}).
We may decompose $Lv=x+y$ with $x\in{\cal R}$ and $y\in{\cal N}$. In fact, the next lemma shows $x\in{\cal H}^1$.
\begin{Lem}\label{infact}
Let $v$ belong to ${\cal H}$ and $x$ be the component of $Lv$ in ${\cal R}$. Then $x$ belongs to the space ${\cal H}^1$.
\end{Lem}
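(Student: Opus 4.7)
The plan is a direct Fourier-coefficient calculation. Since $v\in\mathcal H$, write it in the basis as
$$
v=\sum\bigl[\alpha_{(m,n)}\phi_{(m,n)}+\beta_{(m,n)}\psi_{(m,n)}\bigr],\qquad \sum\bigl[\alpha_{(m,n)}^2+\beta_{(m,n)}^2\bigr]<\infty.
$$
By definition of $L$ and by extracting the part with $m\ne n$,
$$
x=\sum_{m\ne n}\left[\frac{\alpha_{(m,n)}}{m^2-n^2-\lambda_{k-1}-\eps_1}\,\phi_{(m,n)}+\frac{\beta_{(m,n)}}{m^2-n^2-\lambda_{k-1}-\eps_1}\,\psi_{(m,n)}\right].
$$
Because the $\phi_{(m,n)},\psi_{(m,n)}$ are an orthogonal eigenbasis of $-\partial_{ss}$ and $-\partial_{tt}$ with eigenvalues $m^2$ and $n^2$, membership in $H^1$ is characterized by the summability
$$
\sum (1+m^2+n^2)\,|\widehat{x}_{(m,n)}|^2<\infty.
$$
So the goal reduces to showing that the ratios
$$
\rho_{(m,n)}:=\frac{1+m^2+n^2}{(m^2-n^2-\lambda_{k-1}-\eps_1)^2}
$$
are uniformly bounded for $m\in\N$, $n\in\N_0$, $m\ne n$.

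The key step is the lower estimate on the denominator. Since $m\ne n$ gives $|m-n|\ge 1$, one has $|m^2-n^2|=|m-n|(m+n)\ge m+n$, hence
$$
\bigl|m^2-n^2-\lambda_{k-1}-\eps_1\bigr|\ \ge\ (m+n)-|\lambda_{k-1}+\eps_1|,
$$
which is at least $\tfrac12(m+n)$ as soon as $m+n\ge 2|\lambda_{k-1}+\eps_1|$. Combined with $1+m^2+n^2\le 1+(m+n)^2$, this yields a uniform bound $\rho_{(m,n)}\le C$ for all but finitely many pairs. For the remaining finitely many pairs with $m\ne n$, the denominator is nonzero because the choice $0<\eps_1<\lambda_k-\lambda_{k-1}$ places $\lambda_{k-1}+\eps_1$ strictly between two consecutive eigenvalues of $\Box$, so $m^2-n^2-\lambda_{k-1}-\eps_1\ne 0$; thus these finitely many terms contribute a bounded quantity as well.

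Putting this together,
$$
\sum_{m\ne n}(1+m^2+n^2)\,|\widehat{x}_{(m,n)}|^2 \le C\sum\bigl[\alpha_{(m,n)}^2+\beta_{(m,n)}^2\bigr]=C\|v\|^2<\infty,
$$
so $x\in H^1([0,\pi]\times\T)$. Since by construction $x\in\mathcal R$, we conclude $x\in\mathcal H^1=\mathcal R\cap H^1$. The only mildly delicate point is the uniform lower bound on the denominator, which is handled by separating the finite exceptional set from the tail, using the gap $\eps_1<\lambda_k-\lambda_{k-1}$ to ensure non-vanishing on the exceptional set.
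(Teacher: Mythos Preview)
Your proof is correct and follows the same overall Fourier-coefficient strategy as the paper: both reduce the claim to a uniform bound on $(m^2+n^2)/(m^2-n^2-\lambda_{k-1}-\eps_1)^2$ over the indices with $m\ne n$. The difference lies in how the lower bound on the denominator is obtained. The paper factors $m^2-n^2-\lambda_{k-1}-\eps_1=(m-\sqrt{n^2+\lambda_{k-1}+\eps_1})(m+\sqrt{n^2+\lambda_{k-1}+\eps_1})$ and then proves, by a contradiction argument on sequences, that the first factor is bounded away from zero; this yields a single uniform constant valid for all indices at once. Your route is more elementary: you use $|m^2-n^2|\ge m+n$ directly (from $|m-n|\ge 1$) together with the triangle inequality, obtaining the bound immediately for all but finitely many pairs, and then dispose of the finite exceptional set by the non-eigenvalue condition on $\lambda_{k-1}+\eps_1$. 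Your argument avoids the square-root factorization and the sequence dichotomy entirely, at the minor cost of splitting into a tail plus a finite remainder; either way the conclusion and the constants are of the same nature.
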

\begin{proof}
The function $x$ may be expanded as
$$
x=\sum_{|m-n|\geq 1}\left[{\textstyle\frac{\alpha_{(m,n)}}{m^2-n^2-\lambda_{k-1}-\eps_1}}\phi_{(m,n)}+
{\textstyle\frac{\beta_{(m,n)}}{m^2-n^2-\lambda_{k-1}-\eps_1}}\psi_{(m,n)}\right].
$$
Since $\lambda_{k-1}+\eps_1$ is not an eigenvalue of the wave operator,
there exists $\delta>0$ such that
\begin{equation}\label{delta}
|m^2-n^2-\lambda_{k-1}-\eps_1|>\delta
\end{equation}
for all $m\in\N$ and $n\in\N_0$. Without loss of generality, we assume that
$\lambda_{k-1}+\eps_1>0$. Otherwise interchange the roles of $m$ and $n$ below.
We claim that there exists $\eps>0$ such that $\bigl|m-\sqrt{n^2+\lambda_{k-1}+\eps_1}\bigr|>\eps$
for all $m\in\N$ and $n\in\N_0$ with $|m-n|\geq 1$.
We prove this assertion by contradiction.
Suppose there exist a sequence $(m_l,n_l)$ with
\begin{equation}\label{one}
|m_l-n_l|\geq 1
\end{equation}
and
$\bigl|m_l-\sqrt{n_l^2+\lambda_{k-1}+\eps_1}\bigr|\leq 1/l$. Either $m_l$ and $n_l$ are both
bounded, or both sequences are unbounded. In the former case, modulo a subsequence,
$m_l=m_0$ and $n_l=n_0$ for large $l$ and $m_0=\sqrt{n_0^2+\lambda_{k-1}+\eps_1}$.
This contradicts inequality (\ref{delta}). In the latter case,
\begin{eqnarray*}
|m_l-n_l|&\leq&\left|m_l-\sqrt{n_l^2+\lambda_{k-1}+\eps_1}\right|+
\left|\sqrt{n_l^2+\lambda_{k-1}+\eps_1}-n_l\right|\\
&\leq& \frac{1}{l}+
\frac{\lambda_{k-1}+\eps_1}{\sqrt{n_l^2+\lambda_{k-1}+\eps_1}+n_l}\longrightarrow
0\qquad\mbox{as}\ l\to\infty,
\end{eqnarray*}
which contradicts (\ref{one}). This completes the proof of the claim.
Hence
$$
\frac{m^2+n^2}{(m^2-n^2-\lambda_{k-1}-\eps_1)^2}\leq\frac{1}{\eps^2}
\frac{m^2+n^2}{(m+\sqrt{n^2+\lambda_{k-1}+\eps_1})^2}\leq\frac{1}{\eps^2}
$$
implying that $x\in{\cal H}^1$.
\end{proof}
Let $u=Lv=\nabla H^*(v)$. If we write $v$ as in (\ref{alp}) and
$u$ as in (\ref{tet}), then
$(m^2-n^2-\lambda_{k-1}-\eps_1)\theta_{(m,n)}=\alpha_{(m,n)}$ and
$(m^2-n^2-\lambda_{k-1}-\eps_1)\omega_{(m,n)}=\beta_{(m,n)}$.
Therefore $u$ is a weak solution of (\ref{u}) together with
(\ref{bdr}).

We now choose a small parameter $\eps_2$ with
$$0<\eps_2<\frac{1}{\lambda_k-\lambda_{k-1}-\eps_1}-\max\left\{0,-\frac{1}{\lambda_{k-1}+\eps_1}\right\}$$
and take
\begin{equation}\label{muu}
\mu=\max\left\{0,-\frac{1}{\lambda_{k-1}+\eps_1}\right\}+\eps_2.
\end{equation}
We rewrite (\ref{v}) as
\begin{equation}\label{vv}
 (L-\mu)v=\hat{a}v^+-\hat{b}v^-,
\end{equation}
with
\begin{equation}\label{abt}
\hat{a}=\frac{1}{a-\lambda_{k-1}-\eps_1}-\mu,\qquad
\hat{b}=\frac{1}{b-\lambda_{k-1}-\eps_1}-\mu.
\end{equation}
Note that the greatest eigenvalue of $L-\mu$
is $1/(\lambda_k-\lambda_{k-1}-\eps_1)-\mu>0$ and
the space ${\cal N}$ is associated to
the eigenvalue
$\nu:=-1/(\lambda_{k-1}+\eps_1)-\mu<0$.

Fixing $r>0$, we consider solutions of (\ref{vv}) on the line $\hat{b}=r\hat{a}$:
\begin{equation}\label{vvv}
 (L-\mu)v=\hat{a}(v^+-rv^-).
\end{equation}
Next we introduce the $C^1$ functionals $F$ and $G$ from ${\cal H}$ to $\R$ by
\begin{eqnarray*}
 F(v)&=&
\langle (L-\mu)v,v\rangle,\\
 G(v)&=&
\|v^+\|^2+r\|v^-\|^2.
\end{eqnarray*}
To find the point on the line $\hat{b}=r\hat{a}$ farthest away
from the origin for which (\ref{vvv}) has a nonzero solution we
consider the maximization problem
\begin{equation}\label{sup}
\sup_{G(v)=1}F(v)=\,\sup_{v\neq 0}\,\frac{F(v)}{G(v)},\qquad v\in \cal H.
\end{equation}
Finally  $\check{a}(r)$ denotes the value of this supremum.

\subsection{Variational formulation for the
Fu\v{c}ik curve through\\ $(\lambda_k,\lambda_k)$ closest to $(\lambda_{k+1},\lambda_{k+1})$}

Choose a small parameter $\eps_3$ with $0<\eps_3<\lambda_{k+1}-\lambda_k$.
If $u$ is a weak solution of (\ref{z})-(\ref{bdr}) then $u$ solves
\begin{equation}\label{ubis}
 (-\Box+\lambda_{k+1}-\eps_3)u=(\lambda_{k+1}-a-\eps_3)u^+-
(\lambda_{k+1}-b-\eps_3)u^-
\end{equation}
together with (\ref{bdr}).
Suppose $a$, $b<\lambda_{k+1}-\eps_3$.
Let 
$K:\R\to\R$ be the convex function
defined by
$$K(u)=\frac{1}{2}(\lambda_{k+1}-a-\eps_3)(u^+)^2+\frac{1}{2}
(\lambda_{k+1}-b-\eps_3)(u^-)^2,$$
and $M:{\cal H}\to{\cal H}$ be defined by
$$
Mv=\sum\left[{\textstyle\frac{\alpha_{(m,n)}}{\lambda_{k+1}-m^2+n^2-\eps_3}}\phi_{(m,n)}+
{\textstyle\frac{\beta_{(m,n)}}{\lambda_{k+1}-m^2+n^2-\eps_3}}\psi_{(m,n)}\right],
$$
for $v$ as in (\ref{alp}).
If $v=\nabla K(u)$, then
\begin{equation}\label{vbis}
 Mv=\nabla K^*(v)=\frac{1}{\lambda_{k+1}-a-\eps_3}v^+-
\frac{1}{\lambda_{k+1}-b-\eps_3}v^-.
\end{equation}
And conversely, any solution of (\ref{vbis}) is a weak solution of (\ref{ubis})
together with (\ref{bdr}).
Finally, we choose a small parameter $\eps_4$ with
$$0<\eps_4<\frac{1}{\lambda_{k+1}-\lambda_k-\eps_3}-\max\left\{0,\frac{1}{\lambda_{k+1}-\eps_3}\right\}$$
and take
$$
\rho=\max\left\{0,\frac{1}{\lambda_{k+1}-\eps_3}\right\}+\eps_4.
$$
We rewrite (\ref{vbis}) as
\begin{equation}\label{vvbis}
 (M-\rho)v=\bar{a}v^+-\bar{b}v^-,
\end{equation}
with
$$
\bar{a}=\frac{1}{\lambda_{k+1}-a-\eps_3}-\rho,\qquad
\bar{b}=\frac{1}{\lambda_{k+1}-b-\eps_3}-\rho.
$$
Note that the greatest eigenvalue of $M-\rho$
is $1/(\lambda_{k+1}-\lambda_k-\eps_3)-\rho>0$ and
the space ${\cal N}$ is associated to
the eigenvalue
$\sigma:=1/(\lambda_{k+1}-\eps_3)-\rho<0$.

Again fixing  $r>0$, we consider solutions of (\ref{vvbis})
on the line $\bar{b}=r\bar{a}$:
\begin{equation}\label{vvvbis}
 (M-\rho)v=\bar{a}(v^+-rv^-).
\end{equation}
Finally we consider
\begin{equation}\label{supup}
\sup_{v\neq 0}\,\frac{\langle(M-\rho)v,v\rangle}{G(v)},\qquad v\in \cal H
\end{equation}
and denote by
$\tilde{a}(r)$ the value of this supremum.

\section{Proof of existence of solutions of the dual problems}

In this section we prove the existence of maximizers for the
problems (\ref{sup}) and (\ref{supup}) and examine simple
properties of the maxima $\check{a}(r)$ and $\tilde{a}(r)$.
Translating these results in terms of the parameters of the
original equation we obtain the two curves in the Fu\v{c}ik
spectrum stated in Theorems~\ref{curvau} and \ref{curvad}.

\begin{Prop}\label{att}
 The supremum in\/ {\rm (\ref{sup})} is attained.
\end{Prop}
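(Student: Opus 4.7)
The plan is to extract a weak limit from a maximizing sequence and exploit the quadratic structure of $F$ and $G$ together with the spectral properties of $L-\mu$. Let $v_n$ be a maximizing sequence with $G(v_n)=1$. Since $\min\{1,r\}\|v\|^2 \leq G(v) \leq \max\{1,r\}\|v\|^2$, the sequence $\|v_n\|$ is bounded, so after extracting a subsequence we may assume $v_n\weak v$ in ${\cal H}$. I then want to show three things: (i) $F$ is weakly upper semicontinuous, so $\limsup F(v_n)\leq F(v)$; (ii) $G$ is weakly lower semicontinuous, so $G(v)\leq 1$; (iii) $v\neq 0$. A simple rescaling argument at the end will promote $v$ to a maximizer.

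For (i), decompose $v_n=x_n+y_n$ with $x_n\in{\cal R}$ and $y_n\in{\cal N}$, so that
$$
F(v_n)=\langle Lx_n,x_n\rangle-\mu\|x_n\|^2+\nu\|y_n\|^2.
$$
On ${\cal R}$ the operator $L$ is diagonal with eigenvalues $1/(m^2-n^2-\lambda_{k-1}-\eps_1)$, which tend to $0$ as $m+n\to\infty$ since $|m-n|\geq 1$ forces $|m^2-n^2|\geq m+n$; hence $L|_{\cal R}$ is compact. Therefore $Lx_n\to Lx$ strongly and $\langle Lx_n,x_n\rangle\to\langle Lx,x\rangle$. Since $\mu>0$ and $\nu<0$, weak lower semicontinuity of the squared norm applied separately to $\|x_n\|^2$ and $\|y_n\|^2$ yields
$$
\limsup F(v_n)\leq \langle Lx,x\rangle-\mu\|x\|^2+\nu\|y\|^2=F(v).
$$
For (ii), the real functions $t\mapsto(t^+)^2$ and $t\mapsto(t^-)^2$ are convex, so $G$ is convex and continuous on ${\cal H}$, hence weakly lower semicontinuous.

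For (iii), I will check that $\check a(r)>0$: picking any eigenfunction $\varphi$ of the wave operator with eigenvalue $\lambda_k$ (which lies in ${\cal R}$) gives $(L-\mu)\varphi=\bigl(1/(\lambda_k-\lambda_{k-1}-\eps_1)-\mu\bigr)\varphi$, and the factor is positive by the choice of $\eps_2$. Hence $F(\varphi)/G(\varphi)>0$ and $\check a(r)>0$. Combined with (i), this forces $F(v)=\check a(r)>0$, so $v\neq 0$ and $G(v)>0$. If we had $G(v)<1$, the rescaling $\tilde v:=v/\sqrt{G(v)}$ would satisfy $G(\tilde v)=1$ and $F(\tilde v)=F(v)/G(v)>F(v)=\check a(r)$, contradicting the definition of the supremum. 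Therefore $G(v)=1$ and $v$ attains the supremum.

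The main obstacle is the infinite-dimensional kernel ${\cal N}$ of the wave operator, along which a weakly convergent sequence may lose mass. The second shift $\mu$ (which was absent in Ne\v{c}esal's description) is introduced precisely so that $L-\mu$ is strictly negative on ${\cal N}$: this turns the potential loss of mass in ${\cal N}$ into a term of favorable sign and is exactly what makes the upper semicontinuity argument for $F$ succeed.
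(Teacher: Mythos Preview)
Your proof is correct and takes essentially the same approach as the paper's: both decompose along ${\cal R}\oplus{\cal N}$, exploit compactness of $L|_{\cal R}$ (you argue directly from the eigenvalues tending to zero, the paper via Lemma~\ref{infact}) together with the signs $\mu>0$, $\nu<0$ to control $F$ under weak limits, use convexity/weak lower semicontinuity for $G$, and test with a $\lambda_k$-eigenfunction to see $\check a(r)>0$. One cosmetic slip: from (i) and (iii) you only get $F(v)\geq\check a(r)>0$, not equality; but since $F(v)/G(v)\leq\check a(r)$ and $G(v)\leq 1$, your rescaling step then forces $G(v)=1$ and $F(v)=\check a(r)$, so the argument is sound.
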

\begin{proof}
Let $v_n$ be a maximizing
sequence for (\ref{sup}) such that $G(v_n)=1$. We write $v_n=w_n+z_n$ with
$w_n\in{\cal R}$ and $z_n\in{\cal N}$. Modulo a subsequence, we may assume
that $v_n^+\weak \gamma$, $v_n^-\weak \eta$, $v_n\weak v_0$, $w_n\weak w_0$
and $z_n\weak z_0$, all in ${\cal H}$. We remark that $v_0=\gamma-\eta=v_0^+-v_0^-$
with $\gamma\geq v_0^+$ and $\eta\geq v_0^-$.
We prove that $\limsup\|w_n\|=\|w\|$ and $\limsup\|z_n\|=\|z\|$. This will imply that
$\lim\|w_n\|=\|w_0\|$ and $\lim\|z_n\|=\|z_0\|$, so that $v_n\to v$ in ${\cal H}$. To do so
we prove that for any subsequence along which both $\|w_n\|$ and $\|z_n\|$ converge, we have
$\lim\|w_n\|=\|w_0\|$ and $\lim\|z_n\|=\|z_0\|$. So suppose that $\|w_n\|$ and $\|z_n\|$
converge. Since $Lw_n\to Lw_0$ strongly in ${\cal H}$ (see Lemma~\ref{infact}), we have
\begin{eqnarray*}
 \check{a}(r)&=&\lim\langle(L-\mu)v_n,v_n\rangle\\
   &=&\lim\left[\langle(L-\mu)w_n,w_n\rangle+\langle(L-\mu)z_n,z_n\rangle\right]\\
   &=&\lim\left[\langle(L-\mu)w_0,w_0\rangle-\mu(\|w_n\|^2-\|w_0\|^2)\right.\\
    &&\qquad\left.+\langle(L-\mu)z_0,z_0\rangle+\nu(\|z_n\|^2-\|z_0\|^2)\right]\\
    &=&\langle(L-\mu)v_0,v_0\rangle-\mu\lim(\|w_n\|^2-\|w_0\|^2)
    +\nu\lim(\|z_n\|^2-\|z_0\|^2).
\end{eqnarray*}
Recalling that $\mu>0$ and $\nu<0$, we conclude
\begin{equation}\label{pos}
F(v_0)=\langle(L-\mu)v_0,v_0\rangle\geq \check{a}(r),
\end{equation}
and the inequality is strict, unless $\lim\|w_n\|=\|w_0\|$ and $\lim\|z_n\|=\|z_0\|$.
On the other hand from
$$G(v_n)=
\|v^+_n\|^2+r\|v^-_n\|^2 =1$$ it follows that
\begin{equation}\label{g}
 G(v_0)=
\|v^+_0\|^2+r\|v^-_0\|^2 \leq
\|\gamma\|^2+r\|\eta\|^2 \leq 1.
\end{equation}
We know that $\check{a}(r)>0$ by testing $F/G$ with an
eigenfunction associated to $\lambda_k$. Thus inequality
(\ref{pos}) shows that $v_0\neq 0$. Clearly,
$F(v_0)/G(v_0)\leq\check{a}(r)$, so (\ref{pos}) and (\ref{g})
combined imply that $G(v_0)=1$,  $F(v_0)=\check{a}(r)$ and the
sequence $v_n$ converges to $v_0$ strongly in ${\cal H}$.
\end{proof}
\begin{Rem}
The purpose of the second shift $-\mu I$ is twofold. First, to
guarantee that the eigenvalue $\nu$ of $L-\mu$ associated to
${\cal N}$ is negative. And second, given a maximizing sequence,
to guarantee the convergence of the sequence of components in the
range of the wave operator.
\end{Rem}

We may write
\begin{equation}\label{checka}
\check{a}(r)=F(v_0)/G(v_0).
\end{equation}
For all $\varphi\in{\cal H}$ we have
$$
\left.\left(\frac{F}{G}\right)^\prime\right|_{v=v_0}\varphi=
\frac{F'(v_0)-\check{a}(r)G'(v_0)}{G(v_0)}\,\varphi=0,
$$
so that
$$
F'(v_0)=\check{a}(r)G'(v_0).
$$
We conclude that $v_0$ is a nonzero solution of (\ref{vvv}) with
$\hat{a}=\check{a}(r)$.  And there can be no nontrivial solution
of (\ref{vvv}) with $\hat{a}>\check{a}(r)$ for otherwise the functional
$F/G$ would have a critical value greater than its maximum.

Observe that the function $\check{a}$ is strictly decreasing unless the maximizer $v_0$ has
a fixed sign. If so, the function $v_0$ is an eigenfunction and it must correspond to $\lambda_{(1,0)}=1$.
\begin{Lem}\label{cont}
 The function $\check{a}$ is continuous.
\end{Lem}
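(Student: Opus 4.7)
The plan is to prove continuity at an arbitrary $r_0>0$ by sandwiching $\check{a}(r)$ between two quantities that both tend to $\check{a}(r_0)$, using test functions in the variational characterization (\ref{sup}). The key point is that $G$ depends on $r$ in a controlled way: writing $G_r(v)=\|v^+\|^2+r\|v^-\|^2$ we have $G_r(v)-G_{r_0}(v)=(r-r_0)\|v^-\|^2$, which is small uniformly in $v$ as long as $\|v^-\|$ stays bounded.

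For the lower bound $\liminf_{r\to r_0}\check{a}(r)\ge \check{a}(r_0)$, I would take the maximizer $v_0$ for $\check{a}(r_0)$ provided by Proposition \ref{att}, normalized so $G_{r_0}(v_0)=1$, and use it as a test function in (\ref{sup}) at parameter $r$. Since $G_r(v_0)\to G_{r_0}(v_0)=1$ as $r\to r_0$ and $F(v_0)=\check{a}(r_0)$, one gets $\check{a}(r)\ge F(v_0)/G_r(v_0)\to \check{a}(r_0)$.

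For the upper bound $\limsup_{r\to r_0}\check{a}(r)\le \check{a}(r_0)$, let $v_r$ be a maximizer for (\ref{sup}) at parameter $r$, normalized by $G_r(v_r)=1$, so $F(v_r)=\check{a}(r)$. The crucial observation is that restricting to $r\in[r_0/2,2r_0]$ gives the uniform bound $\|v_r^-\|^2\le 1/r\le 2/r_0$, hence
\[
|G_{r_0}(v_r)-1|=|G_{r_0}(v_r)-G_r(v_r)|=|r_0-r|\,\|v_r^-\|^2\le \frac{2|r_0-r|}{r_0}\longrightarrow 0.
\]
Using $v_r$ as a test function in (\ref{sup}) at parameter $r_0$ yields $\check{a}(r_0)\ge F(v_r)/G_{r_0}(v_r)=\check{a}(r)/G_{r_0}(v_r)$, so $\check{a}(r)\le \check{a}(r_0)\,G_{r_0}(v_r)\to \check{a}(r_0)$.

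There is no genuine obstacle here; the only subtlety is securing the uniform $L^2$ bound on $v_r^-$, which is handed to us for free by the normalization $G_r(v_r)=1$ together with $r$ staying away from $0$. Combining the two one-sided inequalities gives $\lim_{r\to r_0}\check{a}(r)=\check{a}(r_0)$, proving continuity on $\R^+$.
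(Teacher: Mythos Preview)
Your proof is correct and follows essentially the same approach as the paper's: both directions use the maximizer at one parameter as a test function at the other, exploiting $G_r(v)-G_{r_0}(v)=(r-r_0)\|v^-\|^2$. The only difference is cosmetic---you make the bound $\|v_r^-\|^2\le 1/r$ explicit by restricting to $r\in[r_0/2,2r_0]$, whereas the paper leaves this implicit when passing to the $\limsup$.
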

\begin{proof}
Let $v_0(r)$ be such that
$\check{a}(r)=F(v_0(r))$ with $G_r(v_0(r))=1$. Let $r_0>0$.
The inequality
$$
\check{a}(r_0)\geq\frac{F(v_0(r))}{G_{r_0}(v_0(r))}=
\frac{\check{a}(r)}{G_r(v_0(r))+(r_0-r)\|v_0^-(r)\|^2}
$$
implies
$$
\limsup_{r\to r_0}\check{a}(r)\leq\check{a}(r_0).
$$
On the other hand, the inequality
$$
\check{a}(r)\geq\frac{F(v_0(r_0))}{G_{r}(v_0(r_0))}=
\frac{\check{a}(r_0)}{G_{r_0}(v_0(r_0))+(r-r_0)\|v_0^-(r_0)\|^2}
$$
implies
$$
\liminf_{r\to r_0}\check{a}(r)\geq\check{a}(r_0).
$$
This proves the continuity of $\check{a}$ at $r_0$.
\end{proof}
Let us now explicitly indicate the dependence of $G$ on $r$ by writing $G_r$.
The equality
$$
\frac{F(-v)}{G_{\frac{1}{r}}(-v)}=r\frac{F(v)}{G_r(v)},
$$
valid for all $v\neq 0$, implies that
\begin{equation}\textstyle
\check{a}\left(\frac{1}{r}\right)=r\check{a}(r).\label{sym}
\end{equation}
So
\begin{equation}\textstyle
\left(\check{a}\left(\frac{1}{r}\right),\frac{1}{r}\check{a}\left(\frac{1}{r}\right)\right)=
\left(r\check{a}(r),\check{a}(r)\right).\label{symm}
\end{equation}

We can now state
\begin{Thm}\label{curvau}
 Fix $\lambda_k\neq 0$. If $\lambda_k>0$ let $\check{Q}_k=]\lambda_{k-1},+\infty[^2$,
and if $\lambda_k<0$ let $\check{Q}_k=]\lambda_{k-1},0[^2$. There exists a continuous
curve ${\cal C}_k\subset\check{Q}_k$ through $(\lambda_k,\lambda_k)$ (symmetric
with respect to the line that bisects the odd quadrants and, if $k\neq 1$,
the graph of a strictly decreasing
function) such that $(a,b)\in{\cal C}_k$ implies {\rm (\ref{z})-(\ref{bdr})} has a nontrivial
weak solution. If $(a,b)$ lies in $\check{Q}_k$ and\/ {\rm below} ${\cal C}_k$, then
{\rm (\ref{z})-(\ref{bdr})} has no nontrivial weak solution.
\end{Thm}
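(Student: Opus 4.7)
The plan is to parameterize ${\cal C}_k$ by the variable $r>0$ from the dual problem, and then read off each property of ${\cal C}_k$ from a corresponding property of $\check{a}$ proved above (or from the Fenchel--Legendre change of variables). Nearly every ingredient is available: Proposition~\ref{att} gives existence of a maximizer, Lemma~\ref{cont} gives continuity of $\check{a}$, and the identity \eqref{symm} encodes the diagonal symmetry.

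For each $r>0$ I would take $v_0(r)$ supplied by Proposition~\ref{att}. Lagrange multipliers applied to the constrained problem \eqref{sup} give $(L-\mu)v_0(r)=\check{a}(r)(v_0(r)^+-rv_0(r)^-)$, so $u=Lv_0(r)$ is a nontrivial weak solution of \eqref{z}--\eqref{bdr} with parameters
$$
a(r)=\lambda_{k-1}+\eps_1+\frac{1}{\check{a}(r)+\mu},\qquad b(r)=\lambda_{k-1}+\eps_1+\frac{1}{r\check{a}(r)+\mu}.
$$
I set ${\cal C}_k=\{(a(r),b(r)):r>0\}$. To check $(\lambda_k,\lambda_k)\in{\cal C}_k$ I compute $\check{a}(1)$: testing $F/G$ against an eigenfunction of the wave operator for $\lambda_k$ gives ``$\geq 1/(\lambda_k-\lambda_{k-1}-\eps_1)-\mu$'', while this value is the largest eigenvalue of $L-\mu$ and $G(v)=\|v\|^2$ when $r=1$, giving the opposite bound; a direct computation then yields $a(1)=b(1)=\lambda_k$. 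The containment ${\cal C}_k\subset\check{Q}_k$ reduces to $a(r),b(r)>\lambda_{k-1}+\eps_1>\lambda_{k-1}$, plus, in the case $\lambda_k<0$, the bound $a(r),b(r)<\lambda_{k-1}+\eps_1+1/\mu<0$ built into the definition of $\mu$.

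For the qualitative properties, continuity is immediate from Lemma~\ref{cont}, and the diagonal symmetry follows from \eqref{symm}, which reads $a(1/r)=b(r)$. For strict monotonicity when $k\neq 1$, the observation preceding Lemma~\ref{cont} says $\check{a}$ is strictly decreasing unless the maximizer has constant sign; but the only constant-sign eigenfunction of the wave operator is $\phi_{(1,0)}$, associated to $\lambda_{(1,0)}=1=\lambda_1$, which is excluded when $k\neq 1$. Hence $a(r)$ is strictly increasing and $b(r)=a(1/r)$ strictly decreasing, so ${\cal C}_k$ is the graph of a strictly decreasing function.

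The step I expect to be the main obstacle is the nonexistence claim. Given a nontrivial weak solution $u$ at $(a,b)\in\check{Q}_k$, the corresponding $v=\nabla H(u)$ solves \eqref{vvv} with $r=\hat{b}/\hat{a}$, and pairing with $v$ gives $F(v)/G(v)=\hat{a}$, so $\hat{a}\leq\check{a}(r)$ by \eqref{sup}. Since $t\mapsto\lambda_{k-1}+\eps_1+1/(t+\mu)$ is strictly decreasing on $(-\mu,\infty)$, this inequality forces $a\geq a(r)$ along the dual ray $\hat{b}=r\hat{a}$, which places $(a,b)$ on or above ${\cal C}_k$ and yields the desired contradiction. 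The hard part is the geometric bookkeeping --- translating ``below ${\cal C}_k$ in $(a,b)$'' into ``past the dual maximum along a ray from the origin in $(\hat{a},\hat{b})$'' --- and, crucially, verifying that the auxiliary parameters $\eps_1,\eps_2$ can always be shrunk (depending on the particular $(a,b)$ under consideration) so that the whole duality remains valid; otherwise the nonexistence conclusion would only cover a sub-strip of $\check{Q}_k$ rather than the entire region below the curve.
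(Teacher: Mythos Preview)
Your proposal is correct and follows essentially the same route as the paper: the same parametrization $r\mapsto(a(r),b(r))$, the same use of Proposition~\ref{att}, Lemma~\ref{cont}, and \eqref{sym}--\eqref{symm}, and the same nonexistence argument via $\hat{a}\leq\check{a}(r)$. The two points you flag as the ``hard part'' are precisely what the paper fills in explicitly---an asymptotic analysis as $r\to 0,\infty$ showing ${\cal C}_k$ reaches $\partial Q_k$ (so ``below'' is well defined), and a short consistency check that the curves obtained for different $(\eps_1,\eps_2)$ agree on overlapping squares, allowing one to exhaust $\check{Q}_k$.
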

\begin{proof}
Let $0<r<\infty$ be fixed and
consider the half-line $\hat{b}=r\hat{a}$ with $\hat{a}>0$
in the $(\hat{a},\hat{b})$-plane. We have seen that there
are no nontrivial solutions of (\ref{vv}) on this line with $\hat{a}>\check{a}(r)$.
We can write the original parameters $a$ and $b$ in (\ref{z}) in terms of $\hat{a}$ and
$\hat{b}$ as
\begin{equation}\label{sstar}
a=\lambda_{k-1}+\eps_1+\frac{1}{\hat{a}+\mu},\qquad b=\lambda_{k-1}+\eps_1+\frac{1}{\hat{b}+\mu}.
\end{equation}
As we increase $\hat{a}$ from zero to infinity, the image of the
half-line in the $(a,b)$-plane is a curve starting at
$(\lambda_{k-1}+\eps_1+1/\mu,\lambda_{k-1}+\eps_1+1/\mu)$ and
ending at $(\lambda_{k-1}+\eps_1,\lambda_{k-1}+\eps_1)$.
The map given by (\ref{sstar}) is a bijection between
$]0,+\infty[^2$ in the $(\hat{a},\hat{b})$ plane
and
\begin{equation}\label{qk}
Q_k:=\ ]p_k,q_k[^2\, :=\,
\left]\lambda_{k-1}+\eps_1,\lambda_{k-1}+\eps_1+{1}/{\mu}\right[^2
\end{equation}
in the $(a,b)$ plane.
If $\lambda_k>0$ then
$Q_k=\left]\lambda_{k-1}+\eps_1,\lambda_{k-1}+\eps_1+{1}/{\eps_2}\right[^2$,
and if $\lambda_k<0$ then
$$Q_k=\left]\lambda_{k-1}+\eps_1,-
\eps_2\frac{(\lambda_{k-1}+\eps_1)^2}{1-\eps_2(\lambda_{k-1}+\eps_1)}\right[^2.$$

Consider the curve ${\cal C}_k$ parametrized by
$$
r\longmapsto
\left(\lambda_{k-1}+\eps_1+\frac{1}{\check{a}(r)+\mu},
\lambda_{k-1}+\eps_1+\frac{1}{r\check{a}(r)+\mu}\right):=(a(r),b(r)).
$$
Clearly, $\check{a}(1)$ is the greatest eigenvalue of $L-\mu$, so the curve
${\cal C}_k$ passes through $(\lambda_k,\lambda_k)$.
As $r\to+\infty$, either $b(r)\to\lambda_{k-1}+\eps_1$ (if $\lim_{r\to+\infty}\check{a}(r)>0$),
or $a(r)\to\lambda_{k-1}+\eps_1+1/\mu$ (if $\lim_{r\to+\infty}\check{a}(r)=0$), or
both. On the other end, as $r\to 0$, either $a(r)\to\lambda_{k-1}+\eps_1$ (if $\lim_{r\to 0}\check{a}(r)=+\infty$),
or $b(r)\to\lambda_{k-1}+\eps_1+1/\mu$ (if $\lim_{r\to 0}\check{a}(r)<+\infty$), or both.
Therefore ${\cal C}_k$ approaches the boundary of $Q_k$ as $r\to 0$ and
$r\to+\infty$.

   From (\ref{sym}), note also that $(\check{a}(r),r\check{a}(r))=
\left(\check{a}(r),\check{a}\left(\frac{1}{r}\right)\right)$.
Suppose $\lambda_k\neq 1$. As $r$ increases, $\check{a}(r)$
decreases and $\check{a}\left(\frac{1}{r}\right)$ increases. The
curve ${\cal C}_k$ is the graph of a strictly decreasing function
$b=b(a)$. It lies in $\{(a,b)\in\R^2\::\:b<\lambda_k<a\ \mbox{or}\
a<\lambda_k<b\ \mbox{or}\ a=\lambda_k=b\}$. In addition
(\ref{symm}) implies that the curve ${\cal C}_k$ is symmetric with
respect to the line $b=a$.

Lemma~\ref{cont} implies that ${\cal C}_k$ is continuous. The
curve ${\cal C}_k$  divides the square $Q_k$ into two connected
components. We say that a point is inside $Q_k$ and {\em below}\/
${\cal C}_k$ if it belongs to the component which contains
$]\lambda_{k-1}+\eps_1,\lambda_k[^2$. There are no points on the
Fu\v{c}ik spectrum of (\ref{z})-(\ref{bdr}) inside the square
$Q_k$ and below ${\cal C}_k$.

Finally we explicitly indicate the dependence of the square $Q_k$
on $\eps_1$ and $\eps_2$ by writing $Q_k=Q_k(\eps_1,\eps_2)$.
If $\lambda_k>0$ we define $\check{Q}_k
:=]\lambda_{k-1},+\infty[^2$, and if $\lambda_k<0$ we define
$\check{Q}_k
:=]\lambda_{k-1},0[^2$. If $(a,b)\in\check{Q}_k$ then we can choose
$\eps_1$, $\eps_2$ sufficiently small so that $(a,b)\in Q_k(\eps_1,\eps_2)$.

Note that if we consider two squares $Q_k$, corresponding to two choices
of the pair $(\eps_1,\eps_2)$, then in their intersection the two curves ${\cal C}_k$
above coincide, for the points of ${\cal C}_k$ belong to the Fu\v{c}ik spectrum of (\ref{z})-(\ref{bdr}),
and the points in the intersection of the squares and below any one of the two curves
${\cal C}_k$ do not belong to the Fu\v{c}ik spectrum of (\ref{z})-(\ref{bdr}).
\end{proof}

A similar proof shows the supremum in (\ref{supup})
is attained and there are no nontrivial solutions
of (\ref{vvvbis}) with $\bar{a}>\tilde{a}(r)$. This leads to
\begin{Thm}\label{curvad}
 Fix $\lambda_k\neq 0$. If $\lambda_k<0$ let $\tilde{R}_k=]-\infty,\lambda_{k+1}[^2$,
and if $\lambda_k>0$ let $\tilde{R}_k=]0,\lambda_{k+1}[^2$. There exists a continuous
curve ${\cal D}_k\subset\tilde{R}_k$ through $(\lambda_k,\lambda_k)$ (symmetric
with respect to the line that bisects the odd quadrants and, if $k\neq 1$,
the graph of a strictly decreasing
function) such that $(a,b)\in{\cal D}_k$ implies {\rm (\ref{z})-(\ref{bdr})} has a nontrivial
weak solution. If $(a,b)$ lies in $\tilde{R}_k$ and\/ {\rm above} ${\cal D}_k$, then
{\rm (\ref{z})-(\ref{bdr})} has no nontrivial weak solution.
\end{Thm}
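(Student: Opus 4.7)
The plan is to mirror the proof of Theorem~\ref{curvau}, with $M-\rho I$ playing the role of $L-\mu I$ throughout. The three preparatory statements --- an analogue of Lemma~\ref{infact} showing $M|_{\cal R}$ is compact from ${\cal H}$ into ${\cal H}^1$, an analogue of Proposition~\ref{att} on attainment of the supremum in (\ref{supup}), and an analogue of Lemma~\ref{cont} giving continuity of $\tilde a$ --- are each proved by a formally identical argument. The compactness of $M|_{\cal R}$ requires only that $\lambda_{k+1}-\eps_3$ is not an eigenvalue of the wave operator, yielding the same small-divisor bound as in Lemma~\ref{infact} with $\lambda_{k+1}-\eps_3$ in place of $\lambda_{k-1}+\eps_1$.

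For attainment, I take a maximizing sequence $v_n$ with $G(v_n)=1$, decompose $v_n=w_n+z_n$ with $w_n\in{\cal R}$, $z_n\in{\cal N}$, and pass to subsequences so that $v_n\weak v_0$, $w_n\weak w_0$, $z_n\weak z_0$, $v_n^\pm\weak\gamma,\eta$ with $\gamma\geq v_0^+$, $\eta\geq v_0^-$. The previous step gives $Mw_n\to Mw_0$ strongly, and $M-\rho$ acts as the scalar $\sigma$ on ${\cal N}$, so along any subsequence where $\|w_n\|$ and $\|z_n\|$ converge
\begin{align*}
\tilde{a}(r) = \langle(M-\rho)v_0,v_0\rangle - \rho\lim(\|w_n\|^2-\|w_0\|^2) + \sigma\lim(\|z_n\|^2-\|z_0\|^2).
\end{align*}
Because $\rho>0$ and $\sigma<0$, this gives $\langle(M-\rho)v_0,v_0\rangle\geq\tilde{a}(r)$ with strict inequality unless both sequences of norms converge. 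Combined with $G(v_0)\leq 1$ and the positivity $\tilde{a}(r)>0$ --- tested on an eigenfunction associated to $\lambda_k$, using $1/(\lambda_{k+1}-\lambda_k-\eps_3)-\rho>0$ --- this forces $v_0\neq 0$, strong convergence, $G(v_0)=1$, and the Lagrange multiplier identity shows $v_0$ is a nonzero solution of (\ref{vvvbis}) with $\bar{a}=\tilde{a}(r)$. No nontrivial solution can exist with $\bar{a}>\tilde{a}(r)$, since that would yield a critical value of the Rayleigh-type quotient above its supremum. The symmetry $\tilde{a}(1/r)=r\tilde{a}(r)$ follows exactly as in (\ref{sym})--(\ref{symm}).

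Finally I would parametrize
$$
{\cal D}_k : r \longmapsto \left(\lambda_{k+1}-\eps_3-\frac{1}{\tilde{a}(r)+\rho},\ \lambda_{k+1}-\eps_3-\frac{1}{r\tilde{a}(r)+\rho}\right),
$$
and observe that the map $(\bar a,\bar b)\in(0,\infty)^2 \mapsto (a,b)$ is a bijection onto a square $R_k(\eps_3,\eps_4) = ]\lambda_{k+1}-\eps_3-1/\rho,\lambda_{k+1}-\eps_3[^2$ which, unlike the situation in Theorem~\ref{curvau}, is \emph{increasing} in each coordinate. Consequently the region $\{\bar a > \tilde a(r)\}$ corresponds precisely to the points of $R_k$ strictly above ${\cal D}_k$, where no solution exists. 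Passage through $(\lambda_k,\lambda_k)$ at $r=1$ comes from $\tilde{a}(1)$ being the largest eigenvalue of $M-\rho$, and monotonicity, symmetry about $b=a$, and patching of the $(\eps_3,\eps_4)$-dependence on overlapping squares exhausting $\tilde{R}_k$ all transfer verbatim from the proof of Theorem~\ref{curvau}.

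The main obstacle, as in Proposition~\ref{att}, is securing strong convergence of the maximizing sequence. It hinges on the sign structure produced by the second shift $-\rho I$: $\rho>0$ renders the ${\cal R}$-part compact and positively weighted, while $\sigma<0$ prevents the infinite-dimensional ${\cal N}$-component from absorbing weak-limit deficit. Without this second shift, the infinite multiplicity of the spectrum on ${\cal N}$ would obstruct attainment.
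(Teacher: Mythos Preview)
Your proposal is correct and follows essentially the same approach as the paper, which simply states that the proof is similar to that of Theorem~\ref{curvau} and records the parametrization of ${\cal D}_k$, the square $R_k(\eps_3,\eps_4)$, and the passage to $\tilde{R}_k$. Your writeup is in fact more detailed than the paper's own proof, and you correctly identify the one genuine difference---that the change of variables $(\bar a,\bar b)\mapsto(a,b)$ is increasing rather than decreasing, so that $\{\bar a>\tilde a(r)\}$ corresponds to the region \emph{above} ${\cal D}_k$.
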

\begin{proof}
The proof is similar to the one of Theorem~\ref{curvau} so we just point out the
differences here.
The image of the half lines $\bar{b}=r\bar{a}$, with $\bar{a}$ increasing from zero
to $+\infty$, are curves in
the $(a,b)$-plane starting at
$(\lambda_{k+1}-\eps_3-1/\rho,\lambda_{k+1}-\eps_3-1/\rho)$ and ending at
$(\lambda_{k+1}-\eps_3,\lambda_{k+1}-\eps_3)$.
Consider the curve ${\cal D}_k$ parametrized by
$$
r\longmapsto
\left(\lambda_{k+1}-\eps_3-\frac{1}{\tilde{a}(r)+\rho},
\lambda_{k+1}-\eps_3-\frac{1}{r\tilde{a}(r)+\rho}\right).
$$
It passes through $(\lambda_k,\lambda_k)$ and divides the square $R_k:=
]\lambda_{k+1}-\eps_3-1/\rho,\lambda_{k+1}-\eps_3[^2
$ in two connected components.
We say that a point is inside $R_k$ and {\em above}\/ ${\cal D}_k$ if it belongs to
the component which
contains $]\lambda_k,\lambda_{k+1}-\eps_3[^2$.
There are no points on the Fu\v{c}ik spectrum of (\ref{z})-(\ref{bdr}) inside the square $R_k$
and above ${\cal D}_k$.
If $\lambda_k<0$ then
$R_k=\left]\lambda_{k+1}-\eps_3-1/\eps_4,\lambda_{k+1}-\eps_3\right[^2$,
and if $\lambda_k>0$ then
$$R_k=\left]
\eps_4\frac{(\lambda_{k+1}-\eps_3)^2}{1+\eps_4(\lambda_{k+1}-\eps_3)},
\lambda_{k+1}-\eps_3
\right[^2.$$

We now write $R_k=R_k(\eps_3,\eps_4)$.
If $\lambda_k<0$ we define $\tilde{R}_k
:=]-\infty,\lambda_{k+1}[^2$, and if $\lambda_k>0$ we define
$\tilde{R}_k
:=]0,\lambda_{k+1}[^2$. If $(a,b)\in\tilde{R}_k$ then we can choose
$\eps_3$, $\eps_4$ sufficiently small so that $(a,b)\in R_k(\eps_3,\eps_4)$.
\end{proof}

\section{An existence result for an asymptotically
linear problem}
We wish to prove the existence of a weak solution of
\begin{equation}\label{p}
\left\{\begin{array}{ll}
\Box u=au^+-bu^-+p&\mbox{in}\:]0,\pi[\times\T,\\
u(0,t)=u(\pi,t)=0&\mbox{for}\ 0\leq t\leq 2\pi.
\end{array}\right.
\end{equation}
As mentioned in the Introduction, our results should be compared with those of \cite{BFS},
\cite{McK5}, \cite{McK2}, \cite{McK1}, \cite{W}. An important difference is that we are able
to consider pairs $(a,b)$ outside the square $[\lambda_{k-1},\lambda_{k+1}]^2$.

Let us initially assume that $(a,b)$ lies in $\check{Q}_k$ and below ${\cal C}_k$.
Furthermore we assume $p:[0,\pi]\times\T\times\R\to\R$ is a continuous function, with
\begin{equation}\label{smallo}
 \lim_{|u|\to+\infty}\frac{p(s,t,u)}{u}=0\ \ \mbox{uniformly in $(s,t)$},
\end{equation}
satisfying:
\begin{itemize}
\item[(H1)] There exists $\underline{\eps}_0>0$ such that for each
$(s,t)\in[0,\pi]\times\T$ the function $u\mapsto
au^+-bu^-+p(s,t,u)-(\lambda_{k-1}+\underline{\eps}_0)u$ is
increasing.
 \item[(H2)] If $\lambda_k>0$ there exists
$\overline{\eps}_0>0$ such that for each $(s,t)\in[0,\pi]\times\T$
the function $u\mapsto p(s,t,u)-(1/\overline{\eps}_0)u$ is
decreasing.
 \item[(H3)] If $\lambda_k<0$ there exists
$\overline{\eps}_0>0$ such that for each $(s,t)\in[0,\pi]\times\T$
the function $u\mapsto au^+-bu^-+p(s,t,u)+\overline{\eps}_0u$ is
decreasing.
\end{itemize}
Choose $\eps_1$ and $\eps_2$ small enough so that $(a,b)\in
Q_k(\eps_1,\eps_2)=\,]p_k,q_k[^2$, with $Q_k$ given by (\ref{qk})
and $\mu$ given by (\ref{muu}). We recall that
$p_k\searrow\lambda_{k-1}$ as $\eps_1\searrow 0$; if $\lambda_k>0$
then $q_k\nearrow+\infty$ as $\eps_2\searrow 0$, and if
$\lambda_k<0$ then $q_k\nearrow 0$ as $\eps_2\searrow 0$.
By decreasing $\eps_1$ and $\eps_2$ if necessary
(so $p_k<\lambda_{k-1}+\underline{\eps}_0$, and
$q_k>(1/\overline{\eps}_0)+\max\{a,b\}$ if $\lambda_k>0$,
$q_k>-\overline{\eps}_0$ if $\lambda_k<0$),
we may assume that for each $(s,t)\in[0,\pi]\times\T$ the map
\begin{equation}\label{increase}
u\mapsto au^+-bu^-+p(s,t,u)-p_ku\ \ \mbox{is strictly increasing},
\end{equation}
and the map
\begin{equation}\label{decrease}
u\mapsto au^+-bu^-+p(s,t,u)-q_ku\ \  \mbox{is strictly decreasing}.
\end{equation}
Therefore, for each $(s,t)\in[0,\pi]\times\T$ the
function $p(s,t,\,\cdot\,)$ is absolutely continuous. In addition, it is true that
$p_k<b+p^\prime_u(s,t,\,\cdot\,)<q_k$ almost everywhere on $\R^-$,
and $p_k<a+p^\prime_u(s,t,\,\cdot\,)<q_k$ almost everywhere on
$\R^+$. Here $p^\prime_u(s,t,\,\cdot\,)$ denotes the derivative
with respect to the third variable. Conversely,
if these inequalities for the derivatives $p^\prime_u$ hold for some
$p_k$ and $q_k$ with $]p_k,q_k[^2=Q_k$,
then (H1)-(H3) hold. In other words,

\begin{Rem} {\rm (H1)-(H3)} hold if and only if
the derivative of the
nonlinear term of the differential equation with respect to $u$ is
between $p_k$ and $q_k$ almost everywhere for some
$p_k$ and $q_k$ with $]p_k,q_k[^2=Q_k$.
\end{Rem}

Define $P$, $J:[0,\pi]\times\T\times\R\to\R$ by
$P(s,t,u)=\int_0^u p(s,t,\tau)\,d\tau$ and
$$
 J(s,t,u)=H(u)+P(s,t,u),
$$
with $H$ given by (\ref{h}). From (\ref{increase}),
for each $(s,t)$ the function $J(s,t,\,\cdot\,)$ is convex.
Denote by $J^*(s,t,\,\cdot\,)$ the Fenchel-Legendre transform of $J(s,t,\,\cdot\,)$:
\begin{equation}\label{cara}
J^*(s,t,v)=\sup_{u\in\R}\,[vu-J(s,t,u)].
\end{equation}
Let $Q$ be such that
$$
 J^*(s,t,v)=H^*(v)+Q(s,t,v).
$$
and $q:=Q^\prime_v$.
Under our assumptions $Q(s,t,\,\cdot\,)$ is $C^1$
as $J(s,t,\,\cdot\,)$ is strictly convex and superlinear (see \cite[Proposition 2.4]{MW}).
Also, for any $v\in\R$, the map $(s,t)\mapsto J^*(s,t,v)$ is
measurable since the supremum in (\ref{cara}) can be taken over the set of rationals.
The function $J^*$ is Carathéodory.
\begin{Lem} Under assumption {\rm (\ref{smallo})},
we have
\begin{equation}\label{smallobis}
 \lim_{|v|\to+\infty}\frac{q(s,t,v)}{v}=0
\end{equation}
uniformly in $(s,t)$.
\end{Lem}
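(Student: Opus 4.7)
The strategy is to unwind the Fenchel transform so that $q(s,t,v)$ becomes an explicit difference of maximizers, and then to use the sublinearity hypothesis (\ref{smallo}) together with the coercivity of $H'$ to control that difference. Since $J(s,t,\,\cdot\,)$ and $H$ are $C^1$, strictly convex and superlinear, the standard duality identity gives that $(J^*)_v'(s,t,v)$ is the unique $u=u(s,t,v)$ satisfying $v=H'(u)+p(s,t,u)$, while $(H^*)'(v)$ is the unique $\tilde u=\tilde u(v)$ with $v=H'(\tilde u)$; hence $q(s,t,v)=u(s,t,v)-\tilde u(v)$. Writing $A:=a-\lambda_{k-1}-\eps_1>0$ and $B:=b-\lambda_{k-1}-\eps_1>0$, we have $H'(w)=Aw^+-Bw^-$, so $\tilde u=v/A$ for $v\geq 0$ and $\tilde u=v/B$ for $v\leq 0$; in particular $|\tilde u|\leq C|v|$.

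The core step is to show that, uniformly in $(s,t)\in[0,\pi]\times\T$, as $|v|\to\infty$ one has $|u|\to\infty$ and $u$ has the same sign as $v$. If $u$ stayed bounded along some sequence $v_n\to+\infty$ (with arbitrary $(s_n,t_n)$), then continuity of $p$ on the compact set $[0,\pi]\times\T\times[-R,R]$ would make $H'(u_n)+p(s_n,t_n,u_n)$ bounded, contradicting $v_n\to+\infty$. For the sign, suppose $v>0$ but $u<0$: then $v=Bu+p(s,t,u)\leq p(s,t,u)$, so $|u|$ cannot be bounded; but for $|u|$ large, $v/u=B+p(s,t,u)/u\to B>0$ by (\ref{smallo}), while the left side is negative, a contradiction. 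The case $v<0$ is symmetric.

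With this in hand, for $v>0$ large both $u$ and $\tilde u$ are positive, so $H'(u)-H'(\tilde u)=A(u-\tilde u)=-p(s,t,u)$, and
$$
\frac{q(s,t,v)}{v}\,=\,\frac{u-\tilde u}{v}\,=\,-\frac{1}{A}\,\frac{p(s,t,u)}{u}\cdot\frac{u}{v}.
$$
From $v=Au+p(s,t,u)$ and (\ref{smallo}) one reads off $u/v\to 1/A$, so $u/v$ stays bounded; together with $|u|\to\infty$ and the uniform-in-$(s,t)$ sublinearity of $p$ this forces $q(s,t,v)/v\to 0$ uniformly. The case $v<0$ is symmetric, with $B$ in place of $A$. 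The main obstacle is the uniform sign/coercivity step; the uniformity in (\ref{smallo}) is exactly what excludes pathological sequences where $p(s_n,t_n,u_n)$ cancels the dominant part of $H'(u_n)$ while $u_n$ stays bounded but $v_n$ escapes.
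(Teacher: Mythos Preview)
Your proof is correct and follows essentially the same route as the paper: both invert the relation $v=J'_u(s,t,u)=H'(u)+p(s,t,u)$, compare the resulting $u$ with $\tilde u=(H^*)'(v)$, and use (\ref{smallo}) to show $q=u-\tilde u$ is $o(v)$ uniformly. The only differences are cosmetic: you make the sign/coercivity step ($|v|\to\infty\Rightarrow|u|\to\infty$ with matching signs) explicit, whereas the paper absorbs it into the choice of the constant $c_1$, and the paper packages your identity $A(u-\tilde u)=-p(s,t,u)$ as a first-order expansion $u^+=\frac{1}{A}v^+(1+y)$ with $y=A\,q/v$.
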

\begin{proof}
We denote by $\dot{a}=a-\lambda_{k-1}-\eps_1$ and $\dot{b}=b-\lambda_{k-1}-\eps_1$.
Let
$$
v=J^\prime_u(s,t,u)=\dot{a}u^+-\dot{b}u^-+p(s,t,u).
$$
We see that $|u|\to+\infty$ is equivalent to $|v|\to+\infty$.
Suppose $0<\eps<\min\left\{\frac{1}{\dot{a}},\frac{1}{\dot{b}}\right\}$.
We choose $c_1$ large enough so that for all $|u|>c_1$ we have
$$
\left|\frac{p(s,t,u)}{u}\right|<\frac{1}{2}\min\{\dot{a}^2,\dot{b}^2\}\eps.
$$
Take $c_2$ such that $|v|>c_2$ implies $|u|>c_1$.
In the first place, assume  $v>c_2$. Then
$$
v=v^+=\dot{a}u^+\left(1+\frac{p(s,t,u)}{\dot{a}u}\right),
$$
or
$$
u^+=\frac{1}{\dot{a}}v^+\frac{1}{1+p(s,t,u)/(\dot{a}u)}=\frac{1}{\dot{a}}v^+(1+y),
$$
where $|y|<\frac{2}{\dot{a}}\left|\frac{p(s,t,u)}{u}\right|<\dot{a}\eps$.
Indeed, we have used
$1/(1-x)=1+y$ with $|y|<2|x|$ for $|x|<1/2$.
On the other hand,
$$
u=(J^*)^\prime_v(s,t,v)
=\frac{1}{\dot{a}}v^+-\frac{1}{\dot{b}}v^-+q(s,t,v)=\frac{1}{\dot{a}}v^
+\left(1+\dot{a}\frac{q(s,t,v)}{v}\right).
$$
It follows that $y=\dot{a}\frac{q(s,t,v)}{v}$ and so
\begin{equation}\label{both}
\left|\frac{q(s,t,v)}{v}\right|<\eps.
\end{equation}
Similarly for $v<-c_2$. We have showed that $|v|>c_2$ implies
(\ref{both}). This proves the lemma.
\end{proof}
Clearly (\ref{smallobis}) yields
\begin{equation}\label{subqua}
\lim_{|v|\to+\infty}\frac{Q(s,t,v)}{v^2}=0
\end{equation}
uniformly in $(s,t)$.
Problem (\ref{p}) is equivalent to
\begin{equation}\label{pp}
 (L-\mu)v=(J^*)^\prime_v(\,\cdot\,,\,\cdot\,,v)-\mu v=\hat{a}v^+-\hat{b}v^-+q(\,\cdot\,,\,\cdot\,,v),
\end{equation}
with $v$ given by
$$
v=J^\prime_u(s,t,u).
$$
This is the Euler-Lagrange equation for the functional
$I:{\cal H}\to\R$ defined by
$$
I(v)=\frac{1}{2}\langle(L-\mu)v,v\rangle-\int\!\!\int_{[0,\pi]\times\T}\left[
J^*(s,t,v)-\frac{\mu}{2}|v|^2\right]ds\,dt.
$$

\begin{Prop}\label{arroz}
The functional $I$ has an absolute maximum.
\end{Prop}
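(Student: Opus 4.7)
The plan is to apply the direct method: show $I$ is bounded above with $I(v)\to-\infty$ as $\|v\|\to\infty$, extract a weakly convergent maximizing sequence, and prove that $I$ is weakly upper semicontinuous on bounded sets. Since $(a,b)$ lies strictly below $\mathcal{C}_k$ inside $\check{Q}_k$, Theorem~\ref{curvau} furnishes a quantitative strict Fu\v{c}ik inequality: with $r=\hat{b}/\hat{a}$ we have $\hat{a}>\check{a}(r)$, so there exists $\eta>0$ with
$$\langle (L-\mu)v,v\rangle \leq (\hat{a}-\eta)\|v^+\|^2 + (\hat{b}-r\eta)\|v^-\|^2,\qquad v\in\mathcal{H}.$$
Rewriting
$$I(v)=\tfrac{1}{2}\langle(L-\mu)v,v\rangle-\tfrac{\hat{a}}{2}\|v^+\|^2-\tfrac{\hat{b}}{2}\|v^-\|^2-\int\!\!\int_{[0,\pi]\times\T}Q(s,t,v)\,ds\,dt$$
and combining this with the subquadratic bound $|\int\!\!\int Q(s,t,v)|\leq \gamma\|v\|^2+C_\gamma$ (which holds for every $\gamma>0$ by~(\ref{subqua})), one obtains $I(v)\leq -\delta\|v\|^2+C$ for some $\delta>0$. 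Thus $\sup_{\mathcal{H}}I<\infty$ and every maximizing sequence $\{v_n\}$ is bounded in $\mathcal{H}$.

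Pass to a subsequence with $v_n\rightharpoonup v_0$, and decompose $v_n=w_n+z_n$ with $w_n\in\mathcal{R}$, $z_n\in\mathcal{N}$. By Lemma~\ref{infact} together with the Rellich imbedding $\mathcal{H}^1\hookrightarrow\mathcal{H}$, the operator $L$ is compact on $\mathcal{R}$, so $Lw_n\to Lw_0$ strongly in $\mathcal{H}$. Since $\mu>0$ and the second shift was introduced precisely so that $\nu<0$ on $\mathcal{N}$, the same reasoning used in the proof of Proposition~\ref{att} yields $\limsup\tfrac{1}{2}\langle(L-\mu)v_n,v_n\rangle\leq\tfrac{1}{2}\langle(L-\mu)v_0,v_0\rangle$. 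The convex quadratic integral $\int\!\!\int[\tfrac{\hat{a}}{2}(v^+)^2+\tfrac{\hat{b}}{2}(v^-)^2]$ is weakly lower semicontinuous.

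The main obstacle is the weak behavior of the nonlinear term $v\mapsto\int\!\!\int Q(s,t,v)$, which need not be weakly continuous on $L^2$ in general. I would exploit the uniform subquadratic estimate~(\ref{smallobis}) by splitting at a threshold $R$: on $\{|v_n|>R\}$ the bound $|Q|\leq \eps v^2$ controls the tail by the uniform $L^2$-bound on $\{v_n\}$, while on its complement the strong convergence $Lw_n\to Lw_0$ on the range component, together with the special structure $z=\kappa(t+s)-\kappa(t-s)$ of elements of $\mathcal{N}$, should provide the required compactness. Should this be insufficient for a bare maximizing sequence, Ekeland's variational principle applied to $-I$ produces a Palais-Smale-type sequence whose equation can be analyzed component-wise through $\nu z_n=P_{\mathcal N}[\hat{a}v_n^+-\hat{b}v_n^-+q(s,t,v_n)]+o(1)$ and $(L-\mu)w_n=P_{\mathcal R}[\hat{a}v_n^+-\hat{b}v_n^-+q(s,t,v_n)]+o(1)$ to upgrade to strong convergence of $v_n$. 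Once $\liminf\int\!\!\int Q(s,t,v_n)\geq\int\!\!\int Q(s,t,v_0)$ is secured, we have $\limsup I(v_n)\leq I(v_0)$, and combined with $I(v_0)\leq\sup I$ we conclude $I(v_0)=\sup I$.
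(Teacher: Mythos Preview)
Your coercivity argument and the handling of the quadratic form $\langle(L-\mu)v,v\rangle$ via the decomposition $v=w+z$ are essentially the paper's approach, and correct. The genuine gap is in the treatment of the nonlinear integral. You separate $J^*$ into $H^*+Q$ and then try to control $\int\!\!\int Q(\cdot,\cdot,v_n)$ by threshold-splitting or by Ekeland's principle; neither argument is carried through, and in fact neither works as stated. On the set $\{|v_n|\leq R\}$ you would need pointwise (or at least $L^1$) convergence of $Q(\cdot,\cdot,v_n)$, but weak $L^2$ convergence gives nothing of the sort, and the structure $z(s,t)=\kappa(t+s)-\kappa(t-s)$ of elements of $\mathcal{N}$ provides no extra compactness in $L^2$---this is precisely the infinite-dimensional kernel difficulty. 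The Ekeland fallback does not close the gap either: from $\nu z_n=P_{\mathcal N}[\hat a v_n^+-\hat b v_n^-+q(\cdot,\cdot,v_n)]+o(1)$ you cannot extract strong convergence of $z_n$, since the right-hand side involves $v_n$ itself and $q(\cdot,\cdot,v_n)$ is merely bounded in $L^2$.

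What you are missing is that hypotheses (H2)--(H3), equivalently (\ref{decrease}), have not yet been used in your argument at all. Their role is exactly to bound $J''_u(s,t,u)\leq q_k-p_k=1/\mu$ a.e., whence $(J^*)''_v\geq\mu$ and the map $v\mapsto J^*(s,t,v)-\frac{\mu}{2}v^2$ is \emph{convex} (and, by (\ref{subqua}), bounded below). Consequently the full functional $v\mapsto\int\!\!\int[J^*(s,t,v)-\frac{\mu}{2}|v|^2]\,ds\,dt$ is weakly lower semicontinuous on $\mathcal H$, and the proof of Proposition~\ref{att} transfers verbatim: the $\limsup$ computation gives $\sup I\leq I(v_0)$ plus a nonnegative correction, forcing $v_n\to v_0$ strongly and $I(v_0)=\sup I$. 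Keep the integrand grouped as $J^*-\frac{\mu}{2}v^2$ rather than splitting off $Q$; the convexity is lost under your decomposition.
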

\begin{proof}

Let $0<c<\min\{a-p_k,b-p_k\}$. There exists $d\geq 0$ such that $J(s,t,u)\geq
\frac{c}{2}u^2-d$ as the function $p$ is continuous. This leads to
$J^*(s,t,v)\leq\frac{1}{2c}v^2+d$. On the other hand
$J(s,t,0)=0$ implies $J^*(s,t,v)\geq 0$. So since $J^*$ is
Carathéodory, the functional $I$ is well defined.

By (\ref{increase}), the function $u\mapsto J(s,t,u)$ is twice
differentiable almost everywhere and its second distributional
derivative is nonnegative. Moreover, (\ref{decrease}) implies that
the function $u\mapsto
\frac{a}{2}(u^+)^2+\frac{b}{2}(u^-)^2+P(s,t,u)-\frac{q_k}{2}u^2
=J(s,t,u)-\frac{1}{2\mu}u^2$ is twice differentiable almost
everywhere and its second distributional derivative is
nonpositive. We conclude that the second distributional derivative
of $J(s,t,\,\cdot\,)$ coincides with its absolutely continuous
part. Furthermore, $J^{\prime\prime}_u(s,t,u)\leq 1/\mu$ almost
everywhere. On the other hand, since $J^*(s,t,\,\cdot\,)$ is
convex, its second distributional derivative is nonnegative. The
density of the absolutely continuous part of the second
distributional derivative of $J^*(s,t,\,\cdot\,)$ at
$J^\prime_u(s,t,u)$ is $(J^*)^{\prime\prime}_v(J^\prime_u(s,t,u))$
(derivative in the Alexandrov sense). At any point at which this
derivative exists $(J^*)^{\prime\prime}_v(J^\prime_u(s,t,u))=
[J^{\prime\prime}_u(s,t,u)]^{-1}\geq\mu$ (see \cite[pp.\
58-59]{V}). Therefore the second distributional derivative of the
map $v\mapsto J^*(s,t,v)-\frac{\mu}{2}|v|^2$ is nonnegative and
this map is convex.

For $\check{a}(r)$ as in (\ref{checka}) and all $v\in{\cal H}$, we know
$$
\langle(L-\mu)v,v\rangle-\check{a}(r)\|v^+\|^2-r\check{a}(r)\|v^-\|^2\leq 0.
$$
Let $(\hat{a},\hat{b})$ be the point given by (\ref{abt}).
Then
\begin{equation}\label{c}
\langle(L-\mu)v,v\rangle-\hat{a}\|v^+\|^2-\hat{b}\|v^-\|^2\leq -c\|v\|^2,
\end{equation}
with $c=\min\left\{\hat{a}-\check{a}\left(\frac{\hat{b}}{\hat{a}}\right),
\hat{b}-\check{a}\left(\frac{\hat{a}}{\hat{b}}\right)\right\}>0$.

Let $v_n$ be a maximizing sequence. Using (\ref{subqua}) and
(\ref{c}), we easily see that the sequence $(v_n)$ is bounded in
${\cal H}$. We write $v_n=w_n+z_n$ with $w_n\in{\cal R}$ and
$z_n\in{\cal N}$. Modulo a subsequence, we know $v_n\weak v_0$,
$w_n\weak w_0$ and $z_n\weak z_0$, all in ${\cal H}$. We prove
that $\limsup\|w_n\|=\|w\|$ and $\limsup\|z_n\|=\|z\|$. So suppose
that $\|w_n\|$ and $\|z_n\|$ converge. Since $Lw_n\to Lw_0$
strongly in ${\cal H}$, we have
\begin{eqnarray*}
 \sup I&\!\!\leq\!\!&I(v_0)-\mu\lim(\|w_n\|^2-\|w_0\|^2)
    +\nu\lim(\|z_n\|^2-\|z_0\|^2)\\
    &&\qquad\, \ -\left(\liminf{\int\!\!\int_{[0,\pi]\times\T}}\left[ J^*(s,t,v_n)-
    \frac{\mu}{2}|v_n|^2\right]ds\,dt\right.\\
        &&\left.\qquad\qquad\qquad\qquad\qquad-
    {\int\!\!\int_{[0,\pi]\times\T}}\left[ J^*(s,t,v_0)
    -\frac{\mu}{2}|v_0|^2\right]ds\,dt\right)
\end{eqnarray*}
The functional $v\mapsto \int\!\!\int_{[0,\pi]\times\T}\left[
J^*(s,t,v)-\frac{\mu}{2}|v|^2\right]ds\,dt$ is weakly
lower semi-con\-tin\-u\-ous in ${\cal H}$ because $v\mapsto
J^*(s,t,v)-\frac{\mu}{2}|v|^2$ is convex and bounded below (using
(\ref{subqua}), as $\hat{a},\hat{b}>0$). We conclude that
$$
I(v_0)=\sup I
$$
and $v_n\to v_0$ in ${\cal H}$.
\end{proof}
\begin{Thm}\label{nonu}
Let $p:[0,\pi]\times\T\times\R\to\R$ be a continuous function
satisfying\/ {\rm (\ref{smallo})} and\/ {\rm {(H1)}} to\/ {\rm
{(H3)}}. Suppose $(a,b)$ lies in $\check{Q}_k$ and below ${\cal
C}_k$. Then problem {\rm (\ref{p})} has a weak solution.
\end{Thm}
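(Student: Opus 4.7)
The plan is to reduce Theorem \ref{nonu} to Proposition \ref{arroz} via the dual variational formulation set up in Section 2. Since $(a,b)\in\check{Q}_k$ lies below ${\cal C}_k$, I first select $\eps_1,\eps_2>0$ small enough that $(a,b)\in Q_k(\eps_1,\eps_2)=\,]p_k,q_k[^2$ and that the monotonicity statements (\ref{increase})--(\ref{decrease}) hold; this is precisely the calibration already carried out after hypotheses (H1)--(H3). With these parameters fixed, the functional $I:{\cal H}\to\R$ is well-defined, and the shifts $\mu,\nu$ and the convex penalty $J^*$ behave as required.

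Proposition \ref{arroz} then produces a maximizer $v_0\in{\cal H}$ of $I$. As $I$ is $C^1$, the equation $I'(v_0)=0$ is exactly the dual equation (\ref{pp}), which I rewrite as $Lv_0=(J^*)'_v(\cdot,\cdot,v_0)$. Define
\[u_0:=(J^*)'_v(\cdot,\cdot,v_0),\]
so that $u_0=Lv_0$. By Fenchel--Legendre duality, using the strict convexity and superlinearity of $J(s,t,\cdot)$ (consequences of (\ref{increase}) and (\ref{decrease})), this is equivalent to $v_0=J'_u(\cdot,\cdot,u_0)$, i.e.
\[v_0=(a-\lambda_{k-1}-\eps_1)u_0^+-(b-\lambda_{k-1}-\eps_1)u_0^-+p(\cdot,\cdot,u_0).\]

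Decomposing $u_0=x_0+y_0$ with $x_0\in{\cal R}$ and $y_0\in{\cal N}$, Lemma \ref{infact} upgrades $x_0$ to ${\cal H}^1$, so $u_0$ lies in ${\cal H}^1\oplus{\cal N}$ and satisfies the boundary conditions (\ref{bdr}) automatically, by the trace/d'Alembert observation made just after the definition of ${\cal I}$. Testing the identity $u_0=Lv_0$ against elements of ${\cal H}^1\oplus{\cal N}$, exactly as in the homogeneous reduction in Section 2, converts it to the weak form of $(\Box-\lambda_{k-1}-\eps_1)u_0=v_0$; combining with the display above gives the weak equation $\Box u_0=au_0^+-bu_0^-+p(\cdot,\cdot,u_0)$, so $u_0$ is a weak solution of (\ref{p}).

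The hard step is already contained in Proposition \ref{arroz}: boundedness of maximizing sequences comes from combining the strict below-${\cal C}_k$ bound (\ref{c}) with the sublinearity (\ref{subqua}) of $q$, while the double shift by $-\mu I$, together with the weak lower semi-continuity of the convex part of $I$, delivers strong convergence of the sequence and hence the maximum. I therefore expect no further obstruction here; once the maximizer is in hand, everything reduces to the routine unfolding of $(J^*)'_v$ to recover $u_0$ from $v_0$ and the verification, analogous to the homogeneous case, that a critical point of $I$ corresponds to a weak solution of the original problem.
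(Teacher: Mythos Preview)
Your proof is correct and follows the same approach as the paper: invoke Proposition~\ref{arroz} to obtain a maximizer $v_0$ of $I$, observe that $v_0$ satisfies the dual Euler--Lagrange equation~(\ref{pp}), and then pass back to $u_0=Lv_0$ via the Fenchel--Legendre correspondence to get a weak solution of~(\ref{p}). The paper's own proof is just two sentences relying on the equivalence between~(\ref{p}) and~(\ref{pp}) already asserted in the text; you have simply unpacked that equivalence in more detail (the inversion $v_0=J'_u(\cdot,\cdot,u_0)$, the use of Lemma~\ref{infact} to place $x_0\in{\cal H}^1$, and the verification of the boundary conditions).
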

\begin{proof}
Any maximum point $v_0$ of $I$ is a solution of (\ref{pp}). The
equivalence between (\ref{p}) and (\ref{pp}) implies $u_0=Lv_0$ is
a weak solution of problem (\ref{p}).
\end{proof}

A similar result holds if $(a,b)$ lies in $\tilde{R}_k$ and above
${\cal D}_k$. In this case, (H1) to  (H3) above should be replaced
by
\begin{itemize}
\item[(H1$'$)] There exists $\underline{\eps}_0>0$ such that for
each $(s,t)\in[0,\pi]\times\T$ the function $u\mapsto
au^+-bu^-+p(s,t,u)-(\lambda_{k+1}-\underline{\eps}_0)u$ is
decreasing.
 \item[(H2$'$)] If $\lambda_k>0$ there exists
$\overline{\eps}_0>0$ such that for each $(s,t)\in[0,\pi]\times\T$
the function $u\mapsto au^+-bu^-+p(s,t,u)-\overline{\eps}_0u$ is
increasing.
 \item[(H3$'$)] If $\lambda_k<0$ there exists
$\overline{\eps}_0>0$ such that for each $(s,t)\in[0,\pi]\times\T$
the function $u\mapsto p(s,t,u)+(1/\overline{\eps}_0)u$ is
increasing.
\end{itemize}
\begin{Thm}\label{nond}
Let $p:[0,\pi]\times\T\times\R\to\R$ be a continuous function
satisfying\/ {\rm (\ref{smallo})} and\/ \/ {\rm (H1$'$)} to\/ {\rm
(H3$'$)}. Suppose $(a,b)$ lies in $\tilde{R}_k$ and above ${\cal
D}_k$. Then problem {\rm (\ref{p})} has a weak solution.
\end{Thm}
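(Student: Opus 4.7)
The plan is to mirror the argument of Theorem~\ref{nonu} in a dual form, with $L-\mu$ replaced by $M-\rho$, the convex function $H$ replaced by $K$, and the curve ${\cal C}_k$ replaced by ${\cal D}_k$. First I would rewrite (\ref{p}) as
$$(-\Box + (\lambda_{k+1}-\eps_3))u = (\lambda_{k+1}-a-\eps_3)u^+ - (\lambda_{k+1}-b-\eps_3)u^- - p(s,t,u),$$
set $J(s,t,u) = K(u) - P(s,t,u)$, and choose $\eps_3,\eps_4$ small enough so that $(a,b)\in R_k(\eps_3,\eps_4)$. For $\eps_3<\underline{\eps}_0$, hypothesis (H1$'$) makes
$J'_u(s,t,u) = (\lambda_{k+1}-\eps_3)u - [au^+ - bu^- + p(s,t,u)]$
strictly increasing, so that $J(s,t,\,\cdot\,)$ is strictly convex; shrinking $\eps_4$ further, (H2$'$) (if $\lambda_k>0$) or (H3$'$) (if $\lambda_k<0$) gives $J''_u(s,t,\,\cdot\,)\leq 1/\rho$ almost everywhere. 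The Alexandrov identity used in Proposition~\ref{arroz} then makes $v\mapsto J^*(s,t,v) - \frac{\rho}{2}|v|^2$ convex.

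With $v = J'_u(s,t,u)$, problem (\ref{p}) becomes
$$(M-\rho)v = (J^*)'_v(\,\cdot\,,\,\cdot\,,v) - \rho v,$$
the Euler-Lagrange equation for
$$I(v) = \frac{1}{2}\langle (M-\rho)v,v\rangle - \int\!\!\int_{[0,\pi]\times\T}\left[J^*(s,t,v) - \frac{\rho}{2}|v|^2\right]ds\,dt,$$
where we write $J^*(s,t,v) = K^*(v) + Q(s,t,v)$ and $q:=Q'_v$. The sublinear growth lemma preceding Proposition~\ref{arroz} applies verbatim with $\dot{a},\dot{b}$ replaced by $\lambda_{k+1}-a-\eps_3,\lambda_{k+1}-b-\eps_3$, yielding $q(s,t,v)/v\to 0$ and hence $Q(s,t,v)/v^2\to 0$ uniformly in $(s,t)$.

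To show $I$ attains its absolute maximum, I would establish the analogue of (\ref{c}),
$$\langle(M-\rho)v,v\rangle - \bar{a}\|v^+\|^2 - \bar{b}\|v^-\|^2 \leq -c\|v\|^2,\qquad c>0,$$
which holds precisely because $(a,b)$ above ${\cal D}_k$ translates, under the change of variables of Section~2, into $(\bar{a},\bar{b})$ lying strictly past the threshold $\tilde{a}(\bar{b}/\bar{a})$ along its ray. Combined with subquadratic growth of $Q$, this gives boundedness of any maximizing sequence $(v_n)$. Splitting $v_n = w_n + z_n$ with $w_n\in{\cal R}$, $z_n\in{\cal N}$ and extracting a weak limit, compactness of $M$ on ${\cal R}$ (as in Lemma~\ref{infact}, since $\lambda_{k+1}-\eps_3$ is not an eigenvalue of $\Box$) gives $Mw_n \to Mw_0$ strongly. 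Weak lower semicontinuity of the convex nonlinear part then closes the estimate exactly as in Proposition~\ref{arroz}, yielding $I(v_0) = \sup I$ and strong convergence $v_n\to v_0$. Setting $u_0 = Mv_0$ produces the required weak solution.

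The main obstacle is, as before, the infinite-dimensional kernel ${\cal N}$ of the wave operator: the second shift $-\rho I$ is essential so that the eigenvalue $\sigma$ of $M-\rho$ associated to ${\cal N}$ is strictly negative, which is what forces the $\sigma\lim(\|z_n\|^2-\|z_0\|^2)$ contribution to have the correct sign and thus delivers strong convergence from weak convergence. Everything else is a routine transcription of Proposition~\ref{arroz} and Theorem~\ref{nonu} with the correspondence $L\!-\!\mu\leftrightarrow M\!-\!\rho$, $H\leftrightarrow K$, $\check{a}\leftrightarrow\tilde{a}$, ${\cal C}_k\leftrightarrow{\cal D}_k$.
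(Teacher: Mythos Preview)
Your proposal is correct and follows exactly the approach the paper intends: it states Theorem~\ref{nond} immediately after remarking ``A similar result holds if $(a,b)$ lies in $\tilde{R}_k$ and above ${\cal D}_k$,'' leaving the transcription $L-\mu\leftrightarrow M-\rho$, $H\leftrightarrow K$, ${\cal C}_k\leftrightarrow{\cal D}_k$ to the reader. You have supplied those details accurately, including the sign change $J=K-P$ forced by the flip $-\Box+\lambda_{k+1}-\eps_3$, the use of (H1$'$)--(H3$'$) to obtain strict convexity of $J(s,t,\,\cdot\,)$ and the bound $J''_u\leq 1/\rho$, and the role of $\rho>0$, $\sigma<0$ in recovering strong convergence of maximizing sequences.
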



\begin{thebibliography}{99}
\bibitem{A}
Abchir, C..
Some remarks about the Fu\v{c}ik spectrum and application to equations with jumping nonlinearities.
Differential Integral Equations 15 (2002), no.\ 9, 1045--1060.
%35P30 (35J60 35J65 47J10)

\bibitem{AP}
Ambrosetti, A.; Prodi, G.
On the inversion of some differentiable mappings with singularities between Banach spaces.
Ann.\ Mat.\ Pura Appl.\ (4) 93 (1972), 231--246.

\bibitem{BFS}
Ben-Naoum, A.K.; Fabry, C.; Smets, D..
Structure of the Fu\v{c}ik spectrum and existence of solutions for equations with asymmetric nonlinearities.
Proc.\ Roy.\ Soc.\ Edinburgh Sect.\ A 131 (2001), no.\ 2, 241--265.
%47J10 (35P30 47H11 47N20 58C30)

\bibitem{C}
Các, N.P..
On nontrivial solutions of a Dirichlet problem whose jumping nonlinearity crosses a multiple eigenvalue.
J.\ Differential Equations 80 (1989), no.\ 2, 379--404.
%35J65 (35P30 58E05 58E15)

\bibitem{Coron}
Coron, J.M..
Periodic solutions of a nonlinear wave equation without assumption of monotonicity.
Math.\ Ann.\ 262 (1983), no.\ 2, 273--285.
%35B10 (35L70 58E05)

\bibitem{McK3}
Choi, Y.S.; McKenna, P.J.; Romano, M..
A mountain pass method for the numerical solution of semilinear wave equations.
Numer.\ Math.\ 64 (1993), no.\ 4, 487--509.
%35A35 (35L70 35Q72 65N99)

\bibitem{D}
Dancer, E.N..
On the Dirichlet problem for weakly non-linear elliptic partial differential equations.
Proc.\ Roy.\ Soc.\ Edinburgh Sect.\ A 76, (1976/77), no.\ 4, 283--300.
%35J65 (47H15)


\bibitem{FG}
de Figueiredo, D.G.; Gossez, J.-P..
On the first curve of the Fu\v{c}ik spectrum of an elliptic operator.
Differential Integral Equations 7 (1994), no.\ 5-6, 1285--1302.
%35P05 (35J15)

\bibitem{F}
Fu\v{c}ik, S..
Boundary value problems with jumping nonlinearities.
\v{C}asopis P\v{e}st.\ Mat.\ 101 (1976), no.\ 1, 69--87.


\bibitem{GK}
Gallou\"et, T.; Kavian, O..
Résultats d'existence et de nonexistence de solutions pour certains problèmes demi-linéaires à l'infini.
C.\ R.\ Acad.\ Sci.\ Paris Sér.\ A-B 291 (1980), no.\ 3, A193--A196.
%47H15

\bibitem{McK5}
Lazer, A.C.; McKenna, P.J..
Large-amplitude periodic oscillations in suspension bridges: some new connections with nonlinear analysis.
SIAM Rev.\ 32 (1990), no.\ 4, 537--578.
%73K12 (34B15 34C15 35B10 47H15 73D35 73K05)

\bibitem{MMP}
Marino, A.; Micheletti, A.M.; Pistoia, A..
A nonsymmetric asymptotically linear elliptic problem.
Topol.\ Methods Nonlinear Anal.\ 4 (1994), no.\ 2, 289--339.

\bibitem{M}
Mawhin, J..
Solutions périodiques d'équations aux dérivées partielles hyperboliques non linéaires.
Miscellanea, 301--315, Presses Univ.\ Bruxelles, Brussels, 1978.


\bibitem{MW}
Mawhin, J.; Willem, M..
Critical point theory and Hamiltonian systems.
Applied Mathematical Sciences, 74. Springer-Verlag, New York, 1989.

\bibitem{McK2}
McKenna, P.J..
On the reduction of a semilinear hyperbolic problem to a Landesman-Lazer problem.
Houston J.\ Math. 4 (1978), no. 4, 577--581.
%35L70

\bibitem{McK9}
McKenna, P.J..
On solutions of a nonlinear wave question when the ratio of the period to the length
of the interval is irrational.
Proc.\ Amer.\ Math.\ Soc.\ 93 (1985), no.\ 1, 59--64.

\bibitem{McK1}
McKenna, P.J.; Redlinger, R.; Walter, W..
Multiplicity results for asymptotically homogeneous semilinear boundary value problems.
Ann.\ Mat.\ Pura Appl.\ (4) 143 (1986), 247--257.
%35J65 (35B45 47H15)

\bibitem{McK4}
McKenna, P.J.; Walter, W..
Nonlinear oscillations in a suspension bridge.
Arch.\ Rational Mech.\ Anal.\ 98 (1987), no.\ 2, 167--177.
%35L75 (35B10 73K05)

\bibitem{MP}
Micheletti, A.M.; Pistoia, A..
A note on the resonance set for a semilinear elliptic equation and an application to jumping nonlinearities.
Topol.\ Methods Nonlinear Anal.\ 6 (1995), no.\ 1, 67--80.
%35J60

\bibitem{N1}
Ne\v{c}esal, P..
The Beam Operator and the Fu\v{c}ik Spectrum.
Proceedings of Equadiff, 11 (Bratislava, 2005).
Vydavatel'stvo STU 
(2007), 303-310.

\bibitem{R}
Ruf, B..
On nonlinear elliptic problems with jumping nonlinearities.
Ann.\ Mat.\ Pura Appl.\ (4) 128 (1981), 133--151.
%35J65

\bibitem{Vej}
Vejvoda, O.; Herrmann, L.; Lovicar, V.; Sova, M.; Stra\v{s}kraba, I.; \v{S}t\v{e}drý, M..
Partial differential equations: time-periodic solutions.
Martinus Nijhoff Publishers, The Hague, 1981.

\bibitem{V}
Villani, C..
Topics in optimal transportation.
Graduate Studies in Mathematics, 58. American Mathematical Society, Providence, RI, 2003.

\bibitem{W}
Willem, M..
Periodic solutions of wave equations with jumping nonlinearities.
J.\ Differential Equations 36 (1980), no.\ 1, 20--27.
%35B10 (35L70)
\end{thebibliography}
\end{document}